\newcounter{subsubsubsection}[subsubsection]
\renewcommand\thesubsubsubsection{\thesubsubsection.\arabic{subsubsubsection}}
\def\toclevel@subsubsubsection{4}
\def\l@subsubsubsection{\@dottedtocline{4}{7em}{4em}}
\newtheorem{thm}{Theorem}[section]
\newtheorem{conjecture}{Conjecture}
\newtheorem{example}{Example}[section]
\newtheorem{remark}{Remark}[section]
\newtheorem{lem}{Lemma}[section]
\newtheorem{proposition}[thm]{Proposition}
\theoremstyle{definition}
\newtheorem{defn}[thm]{Definition}
\newtheorem{corollary}[thm]{Corollary}
\newtheorem{obsv}[thm]{Observation}
\newtheorem{question}[thm]{Question}
\theoremstyle{definition}
\renewenvironment{proof}{\noindent {\bf Proof.}}{\qed}
\title{Rigidity of Circle Packings with Flexible Radii}
\author{Robert Connelly\thanks{Department of Mathematics, Cornell University, Ithaca, USA.} \and  Zhen Zhang\thanks{Center for Applied Mathematics, Cornell University, Ithaca, USA.}}
\date{January 2025}
\begin{document}
	
	\maketitle
	\begin{abstract}
	Circle packings are arrangements of circles that satisfy specified tangency requirements. Many problems about the packing of circles and spheres occur in nature, particularly in material design and protein structure. Rigidity theory studies the realizations of graphs under edge length constraints. This paper tries to bridge classical circle packing problems with modern rigidity theory. We study the rigidity of circle packings representing a given planar graph under radii constraints. We provide sufficient conditions for the packing to be rigid in the first and second order. This gives us a sufficient condition to show a packing is rigid locally. These tools are particularly powerful in optimization problems. Then we will explore the difficulties of global rigidity and explain how the solutions to global rigidity can make progress on some longstanding open problems. 
    
	\end{abstract}
	
	\section{Introduction}
	
	Although the densest packing of equal disks has been solved in dimensions $1,2,3,8$ and $24$\cite{Cohn2016TheSP,Viazovska2016TheSP,Hales2005APO,FejesToth1942}, the packings of unequal disks remain mysterious. Kennedy classified all 9 triangulated packings with 2 sizes \cite{Kennedy2004CompactPO}. Fernique used a non-trivial similar approach to find all triangulated packings with 3 sizes \cite{Fernique2020}. In his 1964 book \textit{Regular Figures} \cite{fejestoth1964}, László Fejes Tóth gave his best guesses of the densest circle packings at given radii ratio: 
\begin{equation*}
    q=r_{min}/r_{max}
\end{equation*}

Gerd Blind showed that as long as $q> \sqrt{\frac{7\tan\tfrac{\pi}{7}-6\tan\tfrac{\pi}{6}}{6\tan\tfrac{\pi}{6}-5\tan\tfrac{\pi}{5}}}\approx 0.74299$, it is not possible to have density higher than the density of hexagonal packing which is $\frac{\pi}{\sqrt{12}}$\cite{blind_1969,blind_1975}. The hexagonal packing is given in Figure \ref{hexagonal}. Since any triangulated packing necessarily beats hexagonal packing in terms of density, there can be no triangulated packing when $q>0.743$ (See \cite{FERNIQUE2025102134} for a review on this topic). If we restrict ourselves to dense triangulated packing only, then we can find a better bound easily: a small disk surrounded by 5 equal disks gives \begin{equation*}
    q = \frac{2\sqrt{2}}{\sqrt{5-\sqrt{5}}}-1\approx 0.70130
\end{equation*}
The triangulated packing with the largest known $q$ given by László Fejes Tóth only has $q\approx 0.6375$\cite{fejestoth1964}. A finite part of this packing is shown in Figure \ref{fejestothhexagon}. 

\begin{figure}
		\centering
		\includegraphics[height=7.5cm]{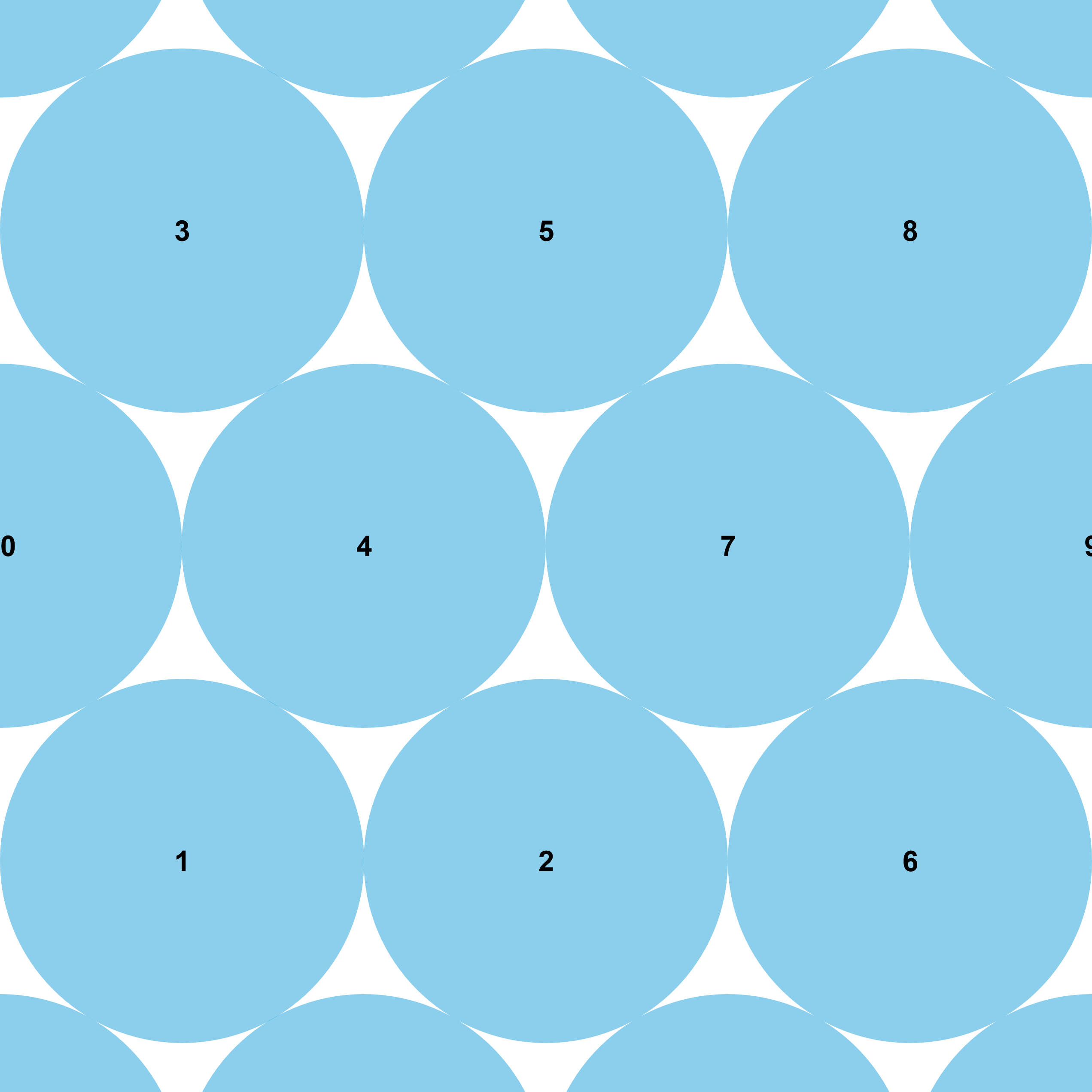}
        \captionsetup{labelsep=colon,margin=1.3cm}
		\caption{The hexagonal packing. It is the densest packing with equal disks in dimension 2}
		\label{hexagonal}
\end{figure}
\begin{figure}
		\centering
		\includegraphics[height=7.5cm]{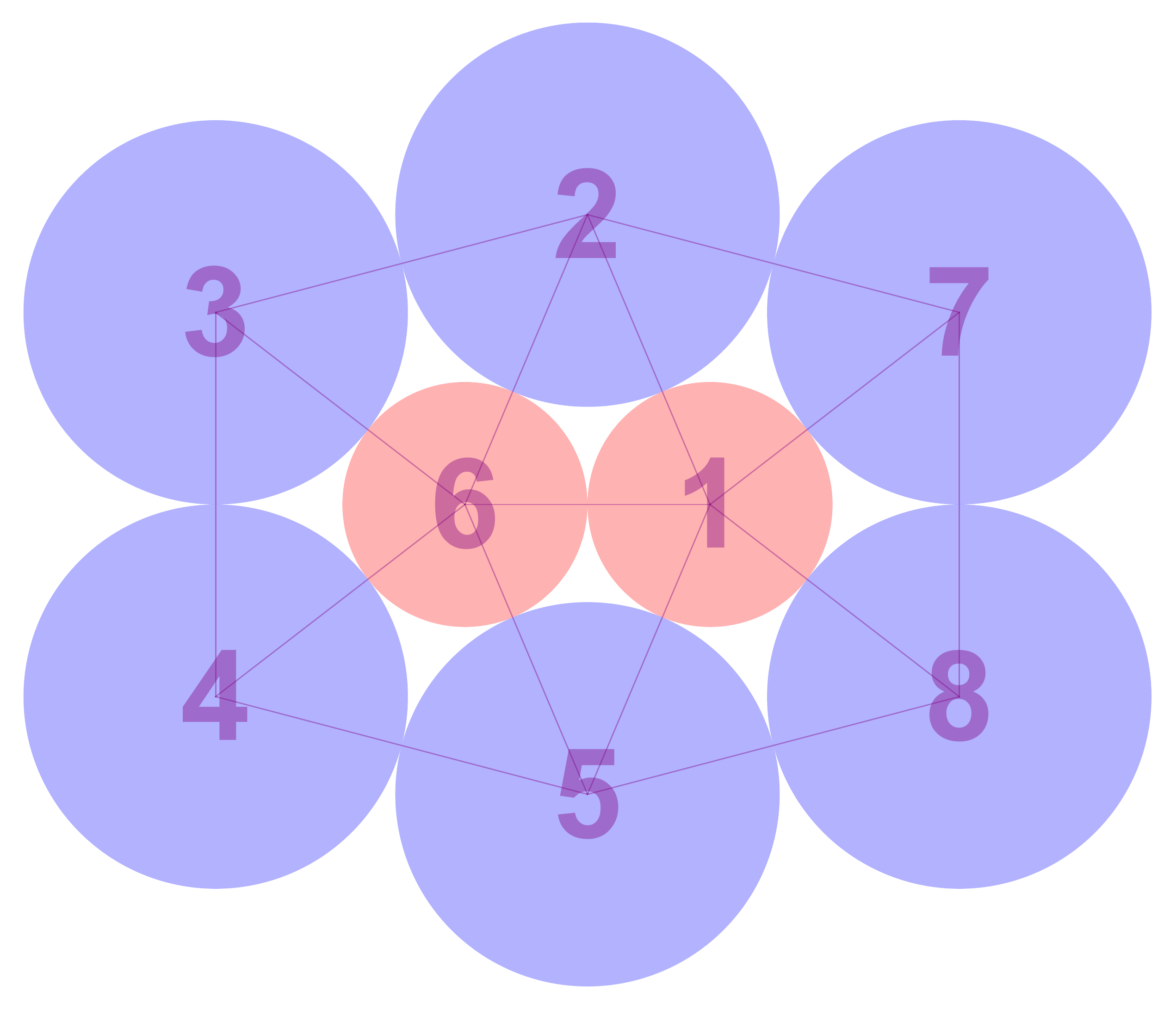}
        \captionsetup{labelsep=colon,margin=1.3cm}
		\caption{An infinite periodic packing with $q\approx 0.6375$ can be constructed by tiling the plane with its boundary hexagon. }
		\label{fejestothhexagon}
\end{figure}

 There is a big gap between 0.6375 and 0.701, but Fejes Tóth's record was undefeated for decades until 2020 when the French computer scientist Thomas Fernique found a better one with $q\approx 0.651$ by enumerating triangulated packings with 3 sizes \cite{Fernique2020}. Part of this packing is given by Figure \ref{fernique}. The gap between $0.701$ and $0.651$ is still quite large. We conjecture that this packing solves the longstanding problem of the largest possible $q$ for triangulated packings with at least two distinct sizes, and that any packing (possibly not triangular) with a density higher than $\frac{\pi}{\sqrt{12}}$ must have $q$ close to $0.651$ (the current record is a packing slightly perturbed from Fernique's, with $q$ approximately $0.658$ \cite{connelly2019maximally}). 
 
\begin{conjecture}\label{uniformconjecture}
    The triangulated periodic packing with 3 sizes demonstrated in Figure \ref{fernique} has the largest radii ratio $q=r_{min}/r_{max}$ among triangulated packings except the hexagonal packing, where the ratio $q$ is a root of $$89x^8+1344x^7+4008x^6-464x^5-2410x^4+176x^3+296x^2-96x+1$$ with a numerical value close to $0.651$\cite{densesttertiary}. 
\end{conjecture}

\begin{figure}
	
		\centering
		\includegraphics[height=7.5cm]{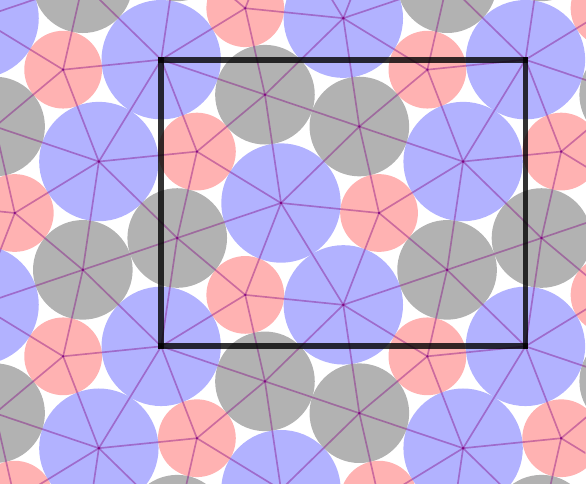}
        \captionsetup{labelsep=colon,margin=1.3cm}
		\caption{A fundamental region is bounded by the black rectangle. This packing has the highest known radii ratio $q$ among any triangulated packing except the hexagonal packing.}
		\label{fernique}
\end{figure}

At first glance, this conjecture seems unreasonable. If Fernique can beat Fejes Tóth's 2 sizes by enumerating 3 sizes, then \textit{surely} it is possible to do better with more sizes. However, having more sizes has a price because the combinatorial structure can force a disk to be larger than another disk that is already large. 

\begin{figure}
	\centering
		\includegraphics[height=7.5cm]{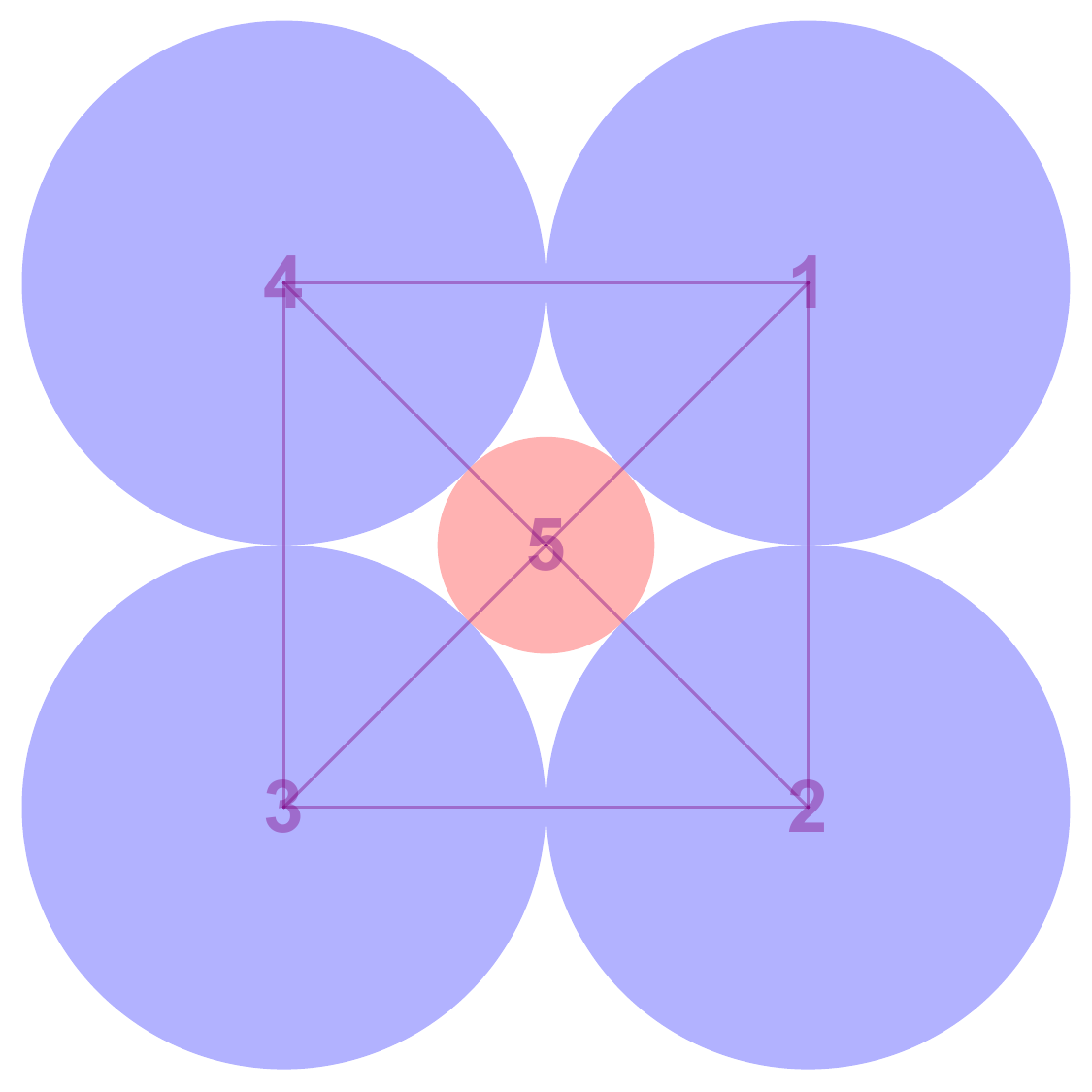}
        \captionsetup{labelsep=colon,margin=1.3cm}
		\caption{a packing representing the graph of a 4-flower. Boundary disks are only allowed to shrink and interior disk are only allowed to expand.}
		\label{4flowers}
\end{figure}

To prove Conjecture \ref{uniformconjecture}, our strategy is to exclude certain finite subgraphs from occurring. Take Figure \ref{4flowers} as an example. If we can prove that it is impossible to increase the size of disk 5 and decrease the size of disk 1 to disk 4 while keeping all the external tangencies, then no disk can have four neighbors in a triangulated packing that beats Fernique's ratio. The reason is that as long as this subgraph occurs, the radii ratio $q\le\frac{\sqrt{2}-1}{1}\approx 0.414$ is smaller than Fernique's $0.651$. 
 
Let $G$ be a finite graph, if there is no finite packing of $G$ with radii in the interval $[q,1]$, then no infinite packing can contain $G$ as a subgraph with radii in $[q,1]$. Thus if a packing of $G$ with radii ratio worse than Figure \ref{fernique} is globally optimal, then $G$ can be eliminated. 

To prove Conjecture \ref{uniformconjecture}, it suffices to prove the triangulation must be the one shown in Figure \ref{fernique} by the uniqueness result of Schramm \cite{Oded}. If we can eliminate sufficiently many finite triangulated graphs so that the triangulation can only be extended as in Figure \ref{fernique}, then Figure \ref{fernique} must have the optimal radii ratio. Hence we have the following observation:
\begin{obsv}
    If Conjecture \ref{uniformconjecture} is true, then it suffices to prove the globally optimal radii ratio is worse than Fernique's for all other triangulations.
\end{obsv}

In fact, if the conjecture is true, we will argue in Section \ref{sectionGlobal} that we only have to compute the globally optimal radii ratio for finitely many finite graphs. This motivates a type of question that has rarely been studied: determining the rigidity of a packing with a given tangency pattern when the radii are bounded by inequality constraints. 
	
        \subsection{Main Results}
Our paper bridges the classical theories of circle packings and the modern geometric rigidity theory of graph embeddings. We (partially) answer the question of when the radii ratio function $q=\frac{r_{\min}}{r_{\max}}$ is locally maximal among circle packings of a given tangency structure. The main result of the first half of the paper is Theorem \ref{firstOrderDuality}. This theorem gives a powerful sufficient condition to assert a packing is locally optimal in terms of the radii ratio $q=\frac{r_{\min}}{r_{\max}}$. The condition, intuitively, says that a circle packing is rigid if and only if there exists an ``internal force'' to stop it from moving infinitesimally without deformation. This theorem is inspired by the analogous result for graphs \cite{rwtf}. The main result of the second half is Theorem \ref{secondOrderDuality}. This theorem, inspired by \cite{secondORigidity}, offers another sufficient test specifically designed to test circle packings which can pass the first test. Intuitively, the second test says the circle packing is rigid if it has an internal force that resists further deformation after the packing has already deformed. These two tests, when combined, almost always work, with no observed exceptions when the packing is triangulated.

	\subsection{Related Results}
In the special case where all the radii are fixed, the problem is equivalent to that of a \textbf{bar framework} in $\mathbb{R}^2$: a graph realization problem with the constraints that all edge lengths are fixed. To see this: make the centers of the disks the set of vertices, and add a bar for each tangent pair. Connelly, Gortler, and Theran proved that if all radii are fixed and generic, then the packing is rigid if and only if there are $2n-3$ touching points \cite{sticky_disc}. 

In the special case where the graph is triangulated, the interior disks are edge connected and that each boundary disk is tangent to an interior disk, Bauer, Stephenson and Wegert proved that the set of packings is a differentiable manifold 
 such that all radii are uniquely determined by the radii on the boundary \cite{bauer_stephenson_wegert_2011}. This result is particularly useful because it gives the global rigidity with certain constraints on the radii. This is one of the few cases where global rigidity can be proved. Collins and Stephenson also give a fast approximation algorithm to compute interior radii from boundary radii \cite{kstep}. 

The question of whether or not a graph has a circle packing with radius in $[q,1]$ is known to be NP-Hard in general. In particular, Breu and Kirkpatrick proved that the problem is NP-complete when $q=1$\cite{BreuKirk}. Therefore, we cannot expect a fast algorithm to determine the global rigidity of circle packings in general. However, the graphs related to Conjecture \ref{uniformconjecture} are all triangulated, and therefore the problem may not be as difficult as the general case.

\subsection{Structure of the Paper}
In Section \ref{sectionDef}, we give our definitions and notation. In Section \ref{sectionInf}, we show the results of the infinitesimal rigidity theory. In Section \ref{sectionComb}, we discuss the combinatorial properties. In Section \ref{sectionGeneric}, we prove rigidity sometimes can be established by the number of algebraically independent radii. In Section \ref{section2ndOrder}, we develop the second-order rigidity theory. In Section \ref{sectionGlobal}, we give examples of globally rigid packings and how they are connected to Conjecture \ref{uniformconjecture}. In Section \ref{sectionDiscussion}, we give the intuitive explanation of different types of rigidity and discuss problems related to finite packings. 
\section{Definitions and Notation}\label{sectionDef}
Let us first formally define the concept of planar graphs, planar embeddings, and circle packings. Let $G=(V,E)$ be an undirected simple graph with $n$ vertices and $m$ edges. 
\begin{defn}
A \textbf{finite circle packing}, $\textbf{p}$, in $\mathbb{R}^2$ is a vector in $\mathbb{R}^{3n}$, $\mathbf{p}:=\langle x_1,y_1,r_1,...,x_n,y_n,r_n\rangle$, where $n$ is the number of disks,  $\textbf{p}_i=(x_i,y_i)$ is the coordinate of the center of the disk $i$, and $r_i>0$ is the radius of the circle $i$. 
\end{defn}

\begin{defn}
    Given an embedding $\textbf{q}$ of $n$ points and a packing $\mathbf{p}$ of $n$ disks in $\mathbb{R}^2$, we say $\mathbf{p}$ is a \textbf{packing of $G$} with orientation $\mathbf{q}$, written as $(G,\mathbf{p})$, if for every edge $e_{ij}\in E$, circle $i$ centered at $\mathbf{p}_i$ and circle $j$ centered at $\mathbf{p}_j$ are externally tangent, i.e. $(r_i+r_j)^2=(x_i-x_j)^2+(y_i-y_j)^2$, and for every vertex in $\mathbf{p}$, its neighbors are embedded in the same order counterclockwise as in $\mathbf{q}$. 
\end{defn}

\begin{defn}
    A packing $\mathbf{p}$ is a \textbf{planar embedding} of graph $G$ if no edges cross each other in the 2-dimensional straight line drawing. A graph that has a planar embedding is called a \textbf{planar graph}. 
\end{defn}

This notation is overloaded in the sense that it is not clear what the orientation is when a packing is written as $(G,\mathbf{p})$. In this paper, all discussions are restricted to one specific orientation, so there is no ambiguity. In particular, when $G$ is topologically a triangulated disk, we take the orientation where no vertex is contained in a triangle. 

 We do not require disks that are not joined by an edge to have disjoint interiors. The reason we do not enforce this common requirement is to avoid global constraints depending both on the combinatorics and the geometry. The orientation is to distinguish different packings of isomorphic graphs. Although the combinatorics are identical for packings of isomorphic graphs, the geometry can be different. In particular, we want to avoid reflections of a subpacking over lines and circles, so the set of packings is a smooth manifold rather than the union of packings for each orientation. 

    By the Koebe–Andreev–Thurston(KAT) Theorem \cite{kobe,andreev,thurston}, a packing of $G$ exists if and only if the graph $G$ is planar. When the graph $G$ is maximal planar (triangulated triangle), then the packing is unique up to Möbius transformations and reflections (for an elementary and constructive proof, see \cite{Flipping_Flow}). Schramm generalized the uniqueness to some classes of infinite triangulated packings \cite{Oded}. 

Among all of the packings, triangulated packings yield simple and interesting results. Almost all packings that were proved to be optimal in terms of density are triangulated. In particular, Figure \ref{fernique} is a finite triangulated packing. We focus on a specific class of triangulated packing:

\begin{defn}A finite packing is \textbf{triangulated} if it's a packing of a graph that is topologically a triangulated disk. A triangulated packing is \textbf{simple} if its interior disks are edge-connected and every boundary disk is adjacent to the interior. 
\end{defn}

	The idea of being ``simple'' is that circle packings of this type behave like harmonic functions on a simply connected region in $\mathbb{R}^2$. It is known that for a simple triangulated packing in the plane, all radii can be determined uniquely and monotonically by the radii on the boundary of the graph (This observation originally comes from Thurston \cite{thurston}, also see\cite{kstep} for details and an algorithm to approximate all radii from boundary radii). Suppose that there are $b$ vertices on the boundary of a triangulated graph $G$, then the set of packings of $G$ is homeomorphic to $(0,\infty)^b$. An elementary proof based on the sums of angles around the interior disks is given in \cite{bauer_stephenson_wegert_2011}. 

To do rigidity on circle packings, let us define what a flex is: 
\begin{defn}
    A \textbf{flex (or motion)} of a packing $(G,\mathbf{p})$ is an analytic path of packings $(G,\mathbf{p}(t))$ in $\mathbb{R}^{3n}$, denoted by $\mathbf{p}(t)=(x_1(t),y_1(t),r_1(t),...,x_n(t),y_n(t),r_n(t))$ that satisfies $(x_j-x_i)^2+(y_j-y_i)^2=(r_i+r_j)^2$ for all edges $e_{ij}$, where $t\in[0,1]$, $\mathbf{p}(0)=\mathbf{p}$. A flex $\mathbf{p}(t)$ is \textbf{trivial} if $\mathbf{p}(t)$ is congruent with $\mathbf{p}(0)$ for all $t\in[0,1]$. 
\end{defn}
\begin{remark}
It would be more natural to define a flex as a continuous path instead of an analytic path. Asimow and Roth showed that these two definitions are equivalent \cite{Asimow1978TheRO,ASIMOW1979171}. 
\end{remark}

By the discussion above, it is not hard to see there is a flex between any two packings(of the same orientation) of a simple triangulated disk if all radii are free. Hence, there must be constraints on the radii in order for rigidity to make sense. The rigidity of packings, in general, is difficult to determine with these constraints. In fact, even rigidity of bar frameworks is difficult to determine beyond the first order. 

\section{Infinitesimal Rigidity}\label{sectionInf}
Let's first define what an infinitesimal flex is:
\begin{defn}
	Given a packing $(G,\mathbf{p})$, a \textbf{infinitesimal flex}(or first order flex) is a vector $\mathbf{p}'=\langle x_1',y_1',r_1',...,x_n',y_n',r_n'\rangle$ satisfying $(\mathbf{p}_i-\mathbf{p}_j)\cdot(\mathbf{p}_i'-\mathbf{p}_j')=(r_i+r_j)(r_i'+r_j')$ for all edges $e_{ij}$. $\mathbf{p}'$ is \textbf{trivial} if it is the derivative of some trivial flex at $t=0$. 
\end{defn}

It is not hard to see why an infinitesimal flex is defined in this way by taking the derivative on both sides of $(r_i+r_j)^2=(x_i-x_j)^2+(y_i-y_j)^2$. This definition is used to ensure that all tangent pairs remain tangent at least infinitesimally. When the packing is more than a single disk, the trivial infinitesimal motions form a 3-dimensional linear space generated by translations and rotations. We assume that our packings have more than one disk in this paper. 

In order to make sense of rigidity and infinitesimal rigidity, constraints are needed on the radii of disks. 
\begin{defn}
    The vertex set $V$ is partitioned into four disjoint sets $V^+, V^-, V^=$, and $V^0$, representing disks that can stay the same or increase their radii, disks that can stay the same or decrease their radii, disks with fixed radii and free disks without constraints, respectively. 
\end{defn}
Throughout the figures in this paper, disks in $V^-$ are colored \textcolor{blue}{blue}, disks in $V^+$ are colored \textcolor{red}{red}, disks in $V^=$ are colored \textcolor{green}{green}, and disks in $V^0$ are colored \textcolor{gray}{gray}. Examples include Figure \ref{fejestothhexagon} and Figure \ref{fernique}. 

Given these constraints, it is possible to define a partial ordering on the set of packings of $G$:

\begin{defn}
	Given $(G,\mathbf{p})$, $(G,\mathbf{\tilde{p}})$ and a vertex partition as above, $\tilde{\mathbf{p}}\succeq \mathbf{p}$ if $\tilde{r}_i\ge r_i$ for all $v_i\in V^+$, $\tilde{r}_i\le r_i$ for all $v_i\in V^-$, and $\tilde{r}_i= r_i$ for all $v_i\in V^=$.
\end{defn}

Three types of rigidity can be defined:
	
	\begin{defn}
	An infinitesimal flex $\mathbf{p}'$ is \textbf{proper} if $r_i'\ge 0$ for every disk $i$ in $V^+$, $r_j'\le 0$ for every disk $j$ in $V^-$, and $r_k'=0$ for every disk $k$ in $V^=$ . A disk packing is \textbf{infinitesimally rigid} (also known as \textbf{first-order rigid}, or \textbf{statically rigid}) if all proper infinitesimal flexes are trivial.
	\end{defn}
	
	\begin{defn}
	A flex $\mathbf{p}(t)$ with $\mathbf{p}(0)=\mathbf{p}$ is \textbf{proper} if $\mathbf{p}(t)\succeq \mathbf{p}$ for $t\in [0,1]$. A packing is (locally) \textbf{rigid} if every proper flex is trivial. 
	\end{defn}
	
	\begin{defn}
	$\mathbf{p}$ is \textbf{globally rigid} if $\tilde{\mathbf{p}}\succeq \mathbf{p}$ implies $\tilde{\mathbf{p}}$ is congruent to $\mathbf{p}$. 
	\end{defn}

    Global rigidity obviously implies rigidity. We will show later that infinitesimal rigidity also implies rigidity. Global rigidity and infinitesimal rigidity do not imply each other. Intuitively, infinitesimal rigidity is a strong form of local rigidity (often preferred by structural engineers). Global rigidity, on the other hand, can be weak at the given packing. 

    Notice that, for global rigidity, we are restricting ourselves to a given orientation to avoid partial reflections. There is no known approach other than brute force to determine the rigidity and global rigidity of a packing in general. The complexity result from \cite{BreuKirk} suggests that no algorithm can determine the general case of global rigidity in polynomial time, unless $P=NP$. Infinitesimal rigidity still yields many interesting results. Next, we will define a matrix that helps us determine infinitesimal rigidity. Similar to the infinitesimal rigidity of tensegrities (the graph embedding problem where there are inequality constraints on the edge lengths), the idea is to define a rigidity matrix such that the kernel is the space of infinitesimal flexes. 

    \begin{defn}
	For a packing $\mathbf{p}$ of a planar graph $G=(V,E)$, where $|V|=n$ and $|E|=m$, we define the \textbf{Rigidity Matrix} $R(\mathbf{p})$ to be the $m\times 3n$ matrix defined as below: 
	 \begin{enumerate}
		\item The $m$ rows are indexed over the edges
		\item The $3n$ columns are indexed over $x_i$, $y_i$, and $r_i$ for $i\in\{1,...,n\}$
		\item For a row indexed by $e_{ij}$, if $k\notin\{i,j\}$, then the entries $x_k,y_k,r_k$ are $0$. 
		\item For a row indexed by $e_{ij}$, if $k=i$, then the entry $x_k$ is $x_i-x_j$, $y_k$ is $y_i-y_j$, $r_k$ is $-r_i-r_j$. If $k=j$, then the entry $x_k$ is $x_j-x_i$, $y_k$ is $y_j-y_i$, $r_k$ is $-r_i-r_j$.
	\end{enumerate} 
	\end{defn}

 Any vector $\mathbf{p}'$ in the kernel of $R(\mathbf{p})$ satisfies $(\mathbf{p}_i-\mathbf{p}_j)(\mathbf{p}_i'-\mathbf{p}_j')=(r_i+r_j)(r_i'+r_j')$ for every edge $e_{ij}\in E$ by the definition of $R(\mathbf{p})$. The infinitesimal rigidity problem is then a linear programming on vectors in the kernel of $R(\mathbf{p})$. However, the kernel necessarily has proper infinitesimal flexes, since the derivative of a congruent motion is always a proper solution. 

 There are four types of proper infinitesimal flexes that can occur in the kernel:
	 \begin{enumerate}
	    \item Trivial infinitesimal flexes are generated by translations and rotations which must fix all radii.
	    \item Proper but non-trivial infinitesimal flexes that do not change any radii.
	    \item Proper infinitesimal flexes that fix $V^+$, $V^=$, and $V^-$ while strictly altering at least one free disk in $V^0$.
	    \item Proper infinitesimal flexes that strictly alter some disks in $V^+$ or $V^-$. 
	\end{enumerate} 

 The first 3 types can be counted by computing the dimension of the kernel with the entries in the corresponding radii being $0$. To force $r_i'=0$ in the kernel, simply add a row to $R(\mathbf{p})$ such that the entry in $r_i$ is $1$ and the other $3n-1$ entries are 0. The last type requires a more sophisticated approach. If the number of constraints is ``just right'', then it might be easier to utilize LP duality to work with the vectors in the cokernel rather than those in the kernel. However, the following proposition \cite[Proposition 2.12]{sticky_disc} states that the cokernel of $R(\mathbf{p})$ has dimension 0: 
	
	\begin{proposition}\label{kerneldimension}
	If $\mathbf{p}$ is a packing of $G$ that has the same orientation with a planar embedding, then the kernel of $R(\mathbf{p})$ has dimension $3n-m$ . 
	\end{proposition}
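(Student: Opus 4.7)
The goal is to show $R(\mathbf p)$ has full row rank $m$, equivalently that its left null space is trivial. So I would start by supposing $\omega=(\omega_{ij})_{(i,j)\in E}\in\mathbb R^m$ satisfies $\omega^\top R(\mathbf p)=0$. Collecting the columns of $R(\mathbf p)$ corresponding to each vertex $k\in V$ yields the pair of identities
\begin{align*}
\sum_{j\sim k}\omega_{kj}(\mathbf p_k-\mathbf p_j)&=\mathbf 0,\\
\sum_{j\sim k}\omega_{kj}(r_k+r_j)&=0.
\end{align*}
Using the packing equality $r_k+r_j=\|\mathbf p_k-\mathbf p_j\|$, the first line says that $\omega$ is an equilibrium self-stress on the bar-and-joint framework whose joints are the centers $\mathbf p_k$ and whose bars are the edges of $G$, while the second is an extra scalar constraint saying the same weights, reweighted by edge lengths, also balance at every vertex.

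My main strategy is then to invoke the Maxwell--Cremona correspondence. Since the centers $\mathbf p_k$ realise $G$ as a planar graph (the tangency graph of a disk packing), the equilibrium self-stress $\omega$ lifts to a piecewise-linear height function $z\colon V\to\mathbb R$ whose graph is polyhedral: on each bounded face $F$ of the embedding, $z$ extends to an affine function with gradient $\mathbf a_F\in\mathbb R^2$, and the gradient jump across an edge $(i,j)$ separating faces $F$ and $F'$ is a known multiple of $\omega_{ij}(\mathbf p_i-\mathbf p_j)^\perp$. I would then translate the radius condition $\sum_{j\sim k}\omega_{kj}\|\mathbf p_k-\mathbf p_j\|=0$ into a geometric condition on the lifting, aiming to show it forces all face gradients $\mathbf a_F$ to coincide, so that $z$ is globally affine and $\omega\equiv 0$ edge by edge.

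The main obstacle is exactly this last translation: equilibrium self-stresses alone already admit nonzero solutions on graphs such as $K_4$, so the argument must make essential use of the sharp identity $r_i+r_j=\|\mathbf p_i-\mathbf p_j\|$ rather than merely positivity of the radii. If the lifting interpretation proves unwieldy, a natural fallback is induction on $n$ by passing to a maximal planar completion $\widehat G$ of $G$. For a maximally planar graph the Koebe--Andreev--Thurston theorem gives a $6$-dimensional M\"obius orbit of packings, matching $3n-m=6$ and pinning down the kernel dimension directly. One then deduces the result for the original $G$ by deleting the added edges one at a time and verifying at each step that the removed row is independent of the rest, either through an explicit local variation of the incident radii or through a small move of the incident centers tangent to the remaining constraints. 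The base case is a single triangle of mutually tangent circles, where $3n-m=9-3=6$ matches the three rigid motions plus three degrees of freedom for radii, and the $3\times 9$ matrix $R(\mathbf p)$ is easily checked to have rank $3$ by computing the minor on the three $r$-columns.
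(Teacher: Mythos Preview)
The paper does not prove this proposition; it is quoted from Connelly--Gortler \cite{sticky_disc} as Proposition~2.12 there. So there is no in-paper argument to compare against, and your proposal has to be assessed on its own merits (and, to the extent one knows it, against the cited reference).

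Your main line via Maxwell--Cremona is the right framework and is essentially how the cited proof proceeds: a nonzero $\omega$ in the cokernel is an equilibrium stress on the planar framework of centers, hence corresponds to a nontrivial polyhedral lift, and the extra radial identities $\sum_{j\sim k}\omega_{kj}(r_k+r_j)=0$ are then shown to force the lift to be affine. However, you stop precisely at the crux: you write ``aiming to show it forces all face gradients $\mathbf a_F$ to coincide'' without carrying out that translation. This is where the exact tangency identity $r_i+r_j=\|\mathbf p_i-\mathbf p_j\|$ (not merely positivity of the radii) must do real work, and it is the substance of the proof rather than a detail to be filled in later.

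Your fallback has two genuine gaps. First, even granting that KAT produces a smooth $6$-dimensional Möbius orbit of packings for a maximal planar $\widehat G$, this does \emph{not} pin down $\dim\ker R(\mathbf p)$: the kernel is the Zariski tangent space to the tangency variety, which can strictly contain the tangent space of the smooth locus (consider $x^2=0$ in $\mathbb R$). Concluding equality is equivalent to the full-row-rank statement you are trying to prove, so the argument is circular without an independent transversality input. Second, and more seriously, the edge-deletion step only establishes the result for the \emph{particular} packing of $G$ that happens to extend to a packing of $\widehat G$. An arbitrary packing $\mathbf p$ of $G$ need not extend to a packing of any triangulation on the same vertex set, since the disks across the added edges will generically fail to be tangent; hence the induction never reaches the general $\mathbf p$. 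Your base-case rank computation for a single triangle is correct, but it does not rescue the inductive scheme.
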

	
	Intuitively, Proposition \ref{kerneldimension} means that in a packing with the orientation of a planar embedding, no edge is redundant as a linear constraint. Each edge strictly reduces the dimension of the kernel by $1$. Hence, the row-rank is equal to the number of rows and the cokernel has dimension $0$.

	Next, we proceed with the idea of separating the first 3 types of infinitesimal flexes from the last type by forcing some radii to be fixed in the kernel of $R(\mathbf{p})$. Let $e_k$ be the unit row vector in $\mathbb{R}^{3n}$ that is $1$ on the location indexed by $r_k$ and $0$ elsewhere. Let $E_S$ be the matrix with rows $e_k$ for $k\in S$. We can define the new matrix:
	\begin{defn}
	The \textbf{Extended Packing Rigidity Matrix}, $R_e(\mathbf{p})$, of a given configuration $\mathbf{p}$ is defined as the block matrix $\begin{bmatrix}R(\mathbf{p})\\E_{V^-}\\E_{V^+}\\E_{V^=} \end{bmatrix}$. 
	\end{defn}
    This is the ``true'' analog of the rigidity matrix for \textbf{tensegrity framework}: a graph realization problem with inequality constraints on edge lengths (the edges are known as cables, bars, and struts for $\le,=,\ge$ constraints). In the rigidity matrix of a tensegrity framework, the kernel fixes the length of bars, cables, and struts. The extended packing rigidity matrix fixes all radii of disks that have constraints on them. 

    The kernel of $R_e(\mathbf{p})$ has the first 3 types of infinitesimal flexes. If the dimension of the kernel is greater than 3, then either the bar framework is infinitesimally flexible, or the radius of at least one free disk is infinitesimally flexible. To distinguish the second type from the third type, we can further compute the dimension of the kernel for the matrix $R'(\mathbf{p})=\begin{bmatrix}R(\mathbf{p})\\E_{V} \end{bmatrix}$ that fixes all radii. If the kernel of $R'(\mathbf{\mathbf{p}})$ has dimension 3, then it cannot have any nontrivial infinitesimal flex fixing all radii. The difference between the dimension of the kernel of $R'(\mathbf{p})$ and that of $R_e(\mathbf{p})$ tells us if there are infinitesimal flexes that alter the radii of at least one free disk. 

    \textbf{Example}: Consider the packing in Figure \ref{4flowers} where each large disk has radius $1$ and disk 5 is centered at the origin. Suppose that we have a constraint on each disk according to the coloring: disks 1 to 4 can only decrease their sizes, and disk 5 can only increase its size. 

	The extended rigidity matrix is 
	\begin{center}
	\resizebox{1.0\textwidth}{!}{$
	R(\mathbf{p})=\begin{blockarray}{cccccccccccccccc}
	x_1 & y_1& r_1&x_2 & y_2& r_2&x_3 & y_3& r_3&x_4 & y_4& r_4&x_5 & y_5& r_5&\\
	\begin{block}{(ccccccccccccccc)c}
	0 & 2 & -2 & 0 & -2 & -2 & 0 & 0 & 0 & 0 & 0 & 0 & 0 & 0 & 0 & e_{12} \\
	0 & 0 & 0 & 2 & 0 & -2 & -2 & 0 & -2 & 0 & 0 & 0 & 0 & 0 & 0 & e_{23}\\
	0 & 0 & 0 & 0 & 0 & 0 & 0 & -2 & -2 & 0 & 2 & -2 & 0 & 0 & 0 & e_{34}\\
	2 & 0 & -2 & 0 & 0 & 0 & 0 & 0 & 0 & -2 & 0 & -2 & 0 & 0 & 0 & e_{14}\\
	1 & 1 & -\sqrt{2} & 0 & 0 & 0 & 0 & 0 & 0 & 0 & 0 & 0 & -1 & -1 & -\sqrt{2} & e_{15} \\
	0 & 0 & 0 & 1 & -1 & -\sqrt{2} & 0 & 0 & 0 & 0 & 0 & 0 & -1 & 1 & -\sqrt{2} & e_{25} \\
	0 & 0 & 0 & 0 & 0 & 0 & -1 & -1 & -\sqrt{2} & 0 & 0 & 0 & 1 & 1 & -\sqrt{2} & e_{35}\\
	0 & 0 & 0 & 0 & 0 & 0 & 0 & 0 & 0 & -1 & 1 & -\sqrt{2} & 1 & -1 & -\sqrt{2} & e_{45}\\
	0 & 0 & 1 & 0 & 0 & 0 & 0 & 0 & 0 & 0 & 0 & 0 & 0 & 0 & 0 & v_1\\
	0 & 0 & 0 & 0 & 0 & 1 & 0 & 0 & 0 & 0 & 0 & 0 & 0 & 0 & 0 & v_2\\
	0 & 0 & 0 & 0 & 0 & 0 & 0 & 0 & 1 & 0 & 0 & 0 & 0 & 0 & 0 & v_3\\
	0 & 0 & 0 & 0 & 0 & 0 & 0 & 0 & 0 & 0 & 0 & 1 & 0 & 0 & 0 & v_4\\
	0 & 0 & 0 & 0 & 0 & 0 & 0 & 0 & 0 & 0 & 0 & 0 & 0 & 0 & 1 & v_5\\
	\end{block}
	\end{blockarray}
	$
	}
	\end{center}

	In this case, the matrix has a 3-dimensional kernel and a 1-dimensional cokernel. Theorem \ref{firstOrderDuality} on the next page tells us that this packing is infinitesimally rigid since the only vector in its cokernel satisfies some linear constraints. It is useful to first define the concept of stress when talking about the cokernel of $R_e(\mathbf{p})$: 
	
	\begin{defn}
	A \textbf{stress} is a real function $\omega:E\to \mathbb{R}$.  If the net force defined in equation (\ref{equilibrium}) on each vertex is zero vector, then it is an \textit{equilibrium stress}. 
	\end{defn} 
	
	Intuitively, we think of a stress on an edge as a \textquotedblleft force density per length". If the stress on the edge $e_{ij}$ is $\omega_{ij}$, then the force acting on disk $i$ from disk $j$ is the vector $\omega_{ij}(\mathbf{p}_i-\mathbf{p}_j)$ in the direction from $\mathbf{p}_j$ to $\mathbf{p}_i$. The force balance condition at vertex $i$ can be written as a vector sum:
    \begin{equation}\label{equilibrium}
    \sum_{j \vert e_{ij}\in E} \omega_{ij}(\mathbf{p}_i-\mathbf{p}_j)=0
    \end{equation}
	
	For the constraints on the radii, it is helpful to determine whether the forces acting on a disk are pushing the boundary towards its center or pulling it away from its center. This quantity can be calculated as the following force sum:
	\begin{equation}\label{radial}
	\omega_i=\sum_{j\vert e_{ij}\in E} \omega_{ij}(r_i+r_j)
	\end{equation}
	
	The cokernel of $R(\mathbf{p})$ is the equilibrium stresses where the radial force sum defined in equation \ref{radial} is $0$ for every $v_i$. The cokernel of $R_e(\mathbf{p})$ is the equilibrium stresses with the sum of radial force defined in Equation \ref{radial} being $0$ for $v_i\in V^0$. 

	Our main result for the infinitesimal theory is the following theorem showing the equivalence between infinitesimal rigidity and the existence of a specific stress: 
	\begin{thm}\label{firstOrderDuality}
	 A disk packing is infinitesimally rigid if and only if the following conditions hold:
	\begin{enumerate}
		\item{(Fixed Radius Condition)} The packing is infinitesimally rigid when the radii of disks in $V^+$, $V^-$, and $V^=$ are fixed. 
		\item{(Stress Existence Condition)} There exists an equilibrium stress $\omega$ such that the radial force sum defined in equation \ref{radial} is positive on $V^-$, negative on $V^+$, and 0 on $V^0$.
	\end{enumerate}
	\end{thm}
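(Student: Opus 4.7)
The plan is to split the theorem into its two implications and handle the harder "only if" direction by linear programming duality, following the same philosophy Roth and Whiteley used for tensegrities.

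For the \emph{sufficiency} direction, I would start with an arbitrary proper infinitesimal flex $\mathbf{p}'$ and pair the stress against it via the identity $0 = \omega^\top R(\mathbf{p})\,\mathbf{p}'$, which holds because $R(\mathbf{p})\mathbf{p}' = 0$. Expanding $\omega^\top R(\mathbf{p})$ column by column: the $x_i$ and $y_i$ columns vanish by the equilibrium condition (\ref{equilibrium}), while the $r_i$ column contributes $-\omega_i$, where $\omega_i$ is the radial force sum in (\ref{radial}). This reduces the identity to
\[
0 \;=\; -\sum_{i=1}^{n}\omega_i\, r_i'.
\]
Summands with $i \in V^0$ vanish because $\omega_i = 0$; those with $i \in V^=$ vanish because $r_i' = 0$; and those with $i \in V^+ \cup V^-$ are each nonpositive because the signs of $\omega_i$ and $r_i'$ are opposite by hypothesis. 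Hence each term must vanish individually, and since $\omega_i \neq 0$ on $V^+ \cup V^-$ one obtains $r_i' = 0$ throughout $V^+ \cup V^- \cup V^=$. Condition (1) then forces $\mathbf{p}'$ to be the derivative of a congruent motion.

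For the \emph{necessity} direction, condition (1) is immediate, since imposing the additional radius-fixing equations on $V^+$ and $V^-$ can only shrink the cone of proper flexes. For condition (2), I would set up the linear program
\[
\text{maximize}\;\; \sum_{i\in V^+} r_i' \,-\, \sum_{i\in V^-} r_i' \quad \text{subject to}\quad R(\mathbf{p})\mathbf{p}' = 0,\;\; r_i' \geq 0\ \text{on}\ V^+,\;\; r_i' \leq 0\ \text{on}\ V^-,\;\; r_i' = 0\ \text{on}\ V^=,
\]
and observe that infinitesimal rigidity forces every feasible $\mathbf{p}'$ to be a trivial flex, so every feasible $r_i'$ is zero and the optimum equals $0$. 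Strong LP duality then guarantees a feasible dual solution. The dual variables are an edge vector $\omega$ (paired against the rows of $R(\mathbf{p})$), unrestricted multipliers on the $V^=$ equations, and nonnegative multipliers on the $V^\pm$ inequalities; dual feasibility decodes column by column so that the $x_i, y_i$ columns reproduce the equilibrium equation (\ref{equilibrium}), the $r_i$ columns on $V^0$ force $\omega_i = 0$, and the $r_i$ columns on $V^+$ and $V^-$ force $\omega_i \leq -1$ and $\omega_i \geq 1$ respectively, which is precisely the strict sign pattern of condition (2).

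The main obstacle I anticipate is not any single hard computation but the bookkeeping: one must verify carefully that the LP dual really decodes into an equilibrium stress with the correct signs in condition (2), and confirm that the strict (not merely weak) inequalities come for free from the unit right-hand side inherited from the objective coefficients on $V^\pm$. The three-dimensional subspace of trivial congruences does not interfere with the LP argument, since those flexes contribute $0$ to the objective and so need not be quotiented out explicitly.
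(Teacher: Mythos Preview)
Your proposal is correct and follows essentially the same route as the paper: the sufficiency argument pairs the stress against a proper flex to force all constrained $r_i'$ to vanish and then invokes condition~(1), while the necessity argument is LP duality in the Roth--Whiteley style. The only cosmetic difference is that the paper packages the necessity step as Farkas' Alternative applied to the block matrix $A^T=\bigl[R(\mathbf{p});-R(\mathbf{p});-E_{V^+};E_{V^-};E_{V^=};-E_{V^=}\bigr]$, whereas you phrase it as strong duality for an explicit LP with objective $\sum_{V^+}r_i'-\sum_{V^-}r_i'$; these are equivalent formulations and your decoding of the dual into an equilibrium stress with $\omega_i\le -1$ on $V^+$ and $\omega_i\ge 1$ on $V^-$ is exactly how the strict signs arise in both versions.
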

	\begin{proof}
		Intuitively, the first condition eliminates infinitesimal motions that are possible even if you fix all constrained disks. The second condition eliminates the proper infinitesimal motions where some disks in $V^+$ or $V^-$ change their radii properly.
		
		$\Leftarrow$: Let $\omega=(...\omega_{ij}...)$ be the desired stress, and let $\mathbf{p}'$ be any proper infinitesimal motion. Extend our $\omega$ to $\omega_e=(...\omega_{ij}...\omega_{k}...)$, so it lives in the cokernel of $R_{e}(\mathbf{p})$, where $\omega_{ij}$ are indexed on edges and $\omega_k$ is indexed over disks in $V^+$, $V^-$ and $V^=$. 
		
		For any proper infinitesimal motion $\mathbf{p}'$, we have
		\begin{equation}\label{3product}
		    \omega_e R_e(\mathbf{p})\mathbf{p}'=\sum_{e_{ij}}\omega_{ij}((\mathbf{p}_i-\mathbf{p}_j)\cdot(\mathbf{p}_i'-\mathbf{p}_j')-(r_i+r_j)(r_i'+r_j'))+\sum_{k\notin V^{0}}\omega_kr_k'
		\end{equation}
		
		The first part is $0$ by the definition of an infinitesimal motion. The second term is non-positive because we assumed $\omega_k$ and $r_k'$ either have opposite signs or $r_k'=0$. Hence any $\omega_kr_k'\not=0$ would imply $\omega_e R_e(\mathbf{p})\mathbf{p}'<0$, contradicting $\omega_e$ being in the cokernel of $R_e(\mathbf{p})$. Therefore, all $r_k'=0$, and by Condition 1, the packing is infinitesimally rigid. 
		
		$\Rightarrow$: If a packing is infinitesimally rigid, then (i) is automatically true. To prove (ii), we need the following lemma. The notation $\textbf{v}>0$ for a vector $\textbf{v}$ means the inequality holds for every coordinates. 

        \begin{lem}\label{farkas}
		
		(Farkas'  Alternative): Let $A\in\mathbb{R}^{m\times n} $ and $b\in\mathbb{R}^m$. Either $Ax=b$ has a solution and $x\ge 0$, or there exists $y$ such that $A^T y\le 0$ but $y^Tb>0$. 
    \end{lem}
		
		The idea is to construct a matrix $A$ such that $A^Ty\le 0$ holds if and only if $y$ is a proper infinitesimal motion. Then, $y^T b\not=0$ suggests that the infinitesimal motion is non-trivial. 
		
		Let $A^T=\begin{bmatrix}
		R(\mathbf{p})\\
		-R(\mathbf{p})\\
		-E_{V^+}\\
		E_{V^-}\\
		E_{V^=}\\
		-E_{V^=}
		\end{bmatrix}$. Let $b$ be any vector in $\mathbb{R}^{3n}$ positive on radii of $V^+$, negative on radii of $V^-$, and 0 everywhere else. $A^Ty\le 0$ forces $y$ to be an infinitesimal motion because $R(\mathbf{p})y\le0$ and $-R(\mathbf{p})y\le 0$ implies $R(\mathbf{p})y=0$. $y$ is a proper infinitesimal motion because all the entries of $y$ indexed by the radii have the correct signs. $y^Tb>0$ implies the infinitesimal motion $y$ cannot fix all radii. 
		
		Since our packing is infinitesimally rigid, such a $y$ must not exist. As a result, the other case $Ax=b$ and $x\ge 0$ must be true. Suppose $x=[\omega^+,\omega^-,\omega^{Ex}]$ is such a solution where the 3 components correspond to $R(\mathbf{p})$,$-R(\mathbf{p})$, and the remaining rows. Let $\omega_i$ be the entry in $\omega^{Ex}$ corresponding to the radius of disk $i$ for $v_i$ in $V^+$, $V^-$ or $V^=$. 
		
		First, observe $\omega=\omega^+-\omega^-$ is an equilibrium stress on the edges because the $x,y$ terms in $Ax=b$ satisfy:
		\begin{equation*}
		    \sum_{j\vert ij\in E}(\omega^+_{ij}-\omega^-_{ij})(\mathbf{p}_i-\mathbf{p}_j)=0
		\end{equation*}

		The radius terms, using $v_i\in V^-$ as an example, in $Ax=b$ has the form
		\begin{equation*}
		    \sum_{j\vert ij\in E}(\omega^+_{ij}-\omega^-_{ij})(-r_i-r_j)+\omega_i=b_i
		\end{equation*}
		
		It can be re-written as
		\begin{equation*}
		\sum_{j\vert ij\in E}(\omega^+_{ij}-\omega^-_{ij})(r_i+r_j)=-b_i+\omega_i
		\end{equation*}
		
		Since $i\in V^-$, $b_i< 0$. By our assumption of $x\ge 0$, we have $\omega_i\ge 0$. As a result, the radial sum of forces around $i$ for stress $\omega^+-\omega^-$ satisfies
		\begin{equation*}
		\sum_{j\vert ij\in E}(\omega^+_{ij}-\omega^-_{ij})(r_i+r_j)>0
		\end{equation*}
		
		For $i\in V^+$, $\omega_i$ is replaced by $-\omega_i$ and $b_i$ is positive, the proof is then identical. For $i\in V^0$, the positive and negative terms of $\omega_i$ cancel and $b_i=0$. 
	\end{proof}

\begin{figure}
	\centering
		\includegraphics[height=7.5cm]{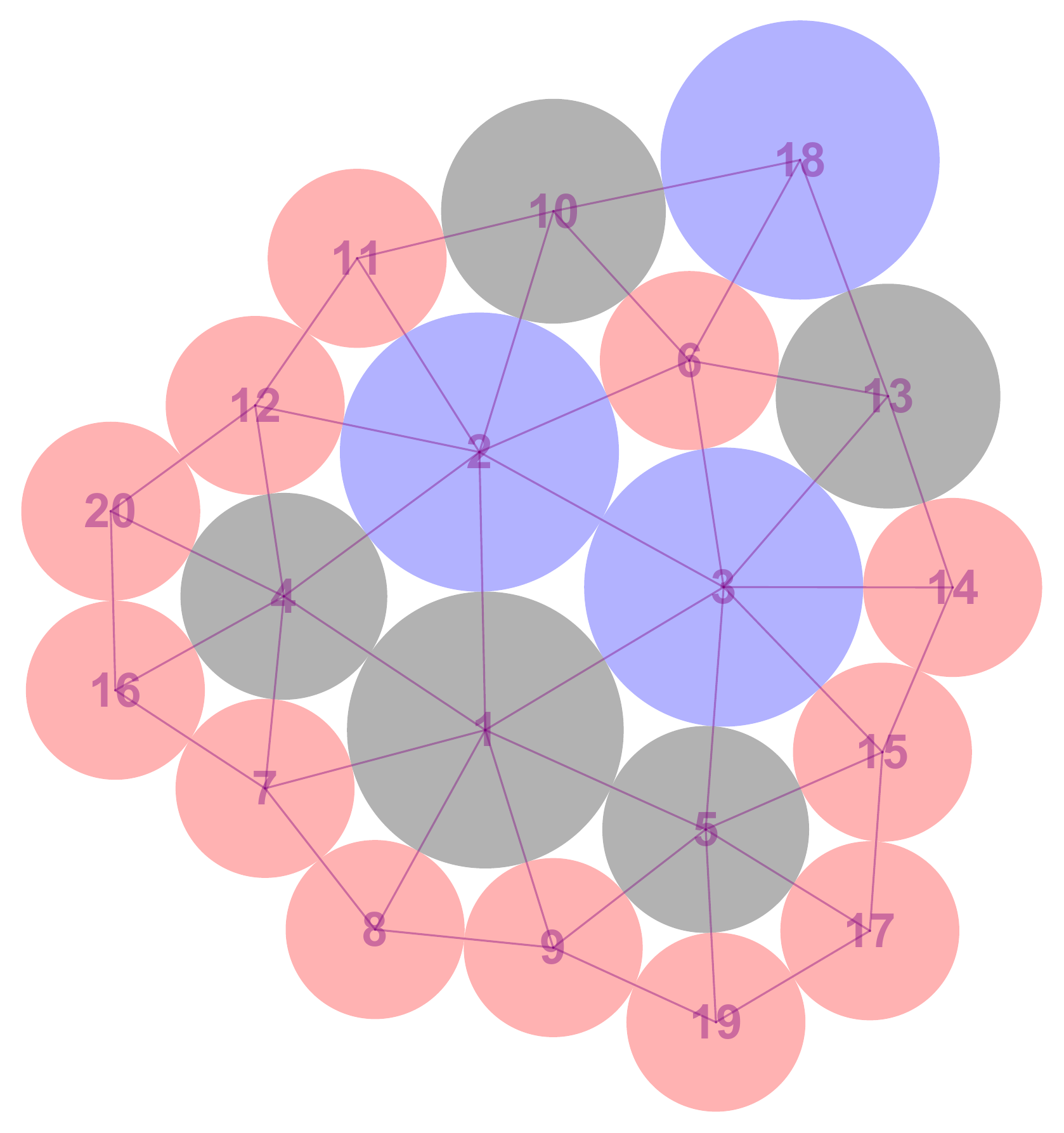}
        \captionsetup{labelsep=colon,margin=1.3cm}
		\caption{A packing that is infinitesimally rigid by the main theorem \ref{firstOrderDuality}. The red disks all have the same size, and the same is true for blue disks. It is impossible to increase the size of red disks while decreasing the size of blue disks locally.}
		\label{20disk}
\end{figure}

    Figures \ref{fejestothhexagon},  \ref{4flowers},  and  \ref{20disk} can be proved to be infinitesimally rigid using this theorem. In fact, a stronger statement holds even when we no longer insist that the disks stay tangent. The following corollary is obvious from the proof above:\begin{corollary}
	
	If the stress existence condition holds with stress $\omega$, then any proper(in the sense of radii change) infinitesimal flex $\mathbf{p}'$ ignoring tangency conditions while satisfying the following two conditions must preserve all radii in $V^+\cup V^-$ and all tangent relations between disk $i$ and disk $j$ if $\omega_{ij}\not=0$:
	 \begin{enumerate}
	    \item If $\omega_{ij}<0$, disk $i$ and disk $j$ remain tangent or become separated 
	    \item If $\omega_{ij}>0$, disk $i$ and disk $j$ remain tangent or become overlapped 
	\end{enumerate}
	\end{corollary}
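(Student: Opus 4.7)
The plan is to recycle the $\Leftarrow$ direction of the proof of Theorem \ref{firstOrderDuality}, simply replacing the equality coming from exact infinitesimal tangency with sign-controlled inequalities. The key observation is that the quantity
\[
(\mathbf{p}_i-\mathbf{p}_j)\cdot(\mathbf{p}_i'-\mathbf{p}_j')-(r_i+r_j)(r_i'+r_j')
\]
is one half of the derivative of $|\mathbf{p}_i-\mathbf{p}_j|^2-(r_i+r_j)^2$ along any smooth path extending $\mathbf{p}$ with velocity $\mathbf{p}'$. Starting from tangency, this derivative is non-negative exactly when the two disks infinitesimally separate (condition (1)) and non-positive exactly when they infinitesimally overlap (condition (2)).

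First I would extend $\omega$ to a cokernel vector $\omega_e$ for $R_e(\mathbf{p})$ just as in the theorem's proof; matching the radius columns shows that the extra entries of $\omega_e$ on the rows of $E_{V^+}$, $E_{V^-}$, $E_{V^=}$ coincide exactly with the radial sums $\omega_k$ defined in \eqref{radial}. Then I would apply identity \eqref{3product} to $\omega_e R_e(\mathbf{p})\mathbf{p}'$. For every edge with $\omega_{ij}>0$, condition (2) makes the bracketed factor non-positive, so the edge term is non-positive; for every edge with $\omega_{ij}<0$, the same follows from condition (1); for $\omega_{ij}=0$ the edge term is zero. For every $k\in V^+\cup V^-$, the sign of $\omega_k$ prescribed by the stress existence condition combines with condition (3) to give $\omega_kr_k'\le 0$; for $k\in V^=$ the factor $r_k'$ is already zero.

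Since $\omega_e$ lies in the cokernel of $R_e(\mathbf{p})$, the left-hand side of \eqref{3product} vanishes, while every summand on the right is non-positive, so each summand must actually be zero. The edge terms give $(\mathbf{p}_i-\mathbf{p}_j)\cdot(\mathbf{p}_i'-\mathbf{p}_j')=(r_i+r_j)(r_i'+r_j')$ whenever $\omega_{ij}\ne 0$, which is precisely infinitesimal preservation of tangency on every stressed edge. The radius terms, together with the fact that $\omega_k$ is \emph{strictly} nonzero on $V^+\cup V^-$ by the stress existence condition, force $r_k'=0$ throughout $V^+\cup V^-$, establishing both conclusions of the corollary.

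There is no real obstacle in this argument; the only care needed is the sign bookkeeping across the four cases determined by the sign of $\omega_{ij}$ and whether $k$ lies in $V^+$ or $V^-$. This is why the statement can be legitimately labeled as obvious from the preceding proof.
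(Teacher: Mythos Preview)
Your proposal is correct and follows essentially the same approach as the paper: expand $\omega_e R_e(\mathbf{p})\mathbf{p}'$ via identity \eqref{3product}, observe that conditions (1)--(3) force every summand to be non-positive, and conclude from $\omega_e$ being in the cokernel that each summand vanishes. The paper's own proof compresses this into two sentences, but your more careful sign bookkeeping and the explicit remark that strict nonvanishing of $\omega_k$ on $V^+\cup V^-$ is what forces $r_k'=0$ are exactly the details behind it.
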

	\begin{proof}
	    Consider $$\omega R_e(\mathbf{p})\mathbf{p}'=\sum_{e_{ij}}\omega_{ij}((\mathbf{p}_i-\mathbf{p}_j)\cdot(\mathbf{p}_i'-\mathbf{p}_j')-(r_i+r_j)(r_i'+r_j'))+\sum_{k\notin V^0}\omega_kr_k'$$ If the above conditions hold, then each term is non-positive. However, $\omega$ is in the cokernel of $R_e(\mathbf{p})$, and hence all terms are $0$. 
	\end{proof}

	The reason we study infinitesimal rigidity is that it implies rigidity. This is a consequence of the product rule for derivatives:
	\begin{proposition}\label{firstorderrig}
	If a disk packing is infinitesimally rigid, it is rigid. 
	\end{proposition}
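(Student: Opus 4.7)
The plan is to prove the contrapositive: assume the packing admits a proper analytic flex $\mathbf{p}(t)$ with $\mathbf{p}(0) = \mathbf{p}$ that is not itself a congruent motion, and produce a nontrivial proper infinitesimal flex, contradicting infinitesimal rigidity. The subtlety is that $\mathbf{p}'(0)$ may itself be the derivative of a congruent motion (so $\mathbf{p}(t)$ looks like a congruence only to first order), so the argument must be prepared to pass to a higher-order derivative. To isolate the truly non-congruence component of the flex, I would factor out the congruent part using a local slice.

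Near $\mathbf{p}$ the $3$-dimensional group $G$ of orientation-preserving Euclidean motions acts freely (assuming $\mathbf{p}$ has at least two distinct disk centers, which is automatic for any nontrivial packing), so I can choose an analytic slice $\Sigma$ through $\mathbf{p}$ transversal to the $G$-orbit, e.g.\ the set of configurations with disk $1$ at the origin and disk $2$ on the positive $x$-axis. The flex then factors uniquely and analytically as $\mathbf{p}(t) = g(t)\cdot\mathbf{q}(t)$ with $g(t)\in G$, $g(0)=\mathrm{id}$, and $\mathbf{q}(t)\in\Sigma$. Because congruences preserve both tangencies and radii, $\mathbf{q}(t)$ is again a proper analytic flex; and $\mathbf{q}\equiv\mathbf{p}$ would make $\mathbf{p}(t)=g(t)\cdot\mathbf{p}$ a congruent motion, so by hypothesis $\mathbf{q}$ is non-constant. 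By analyticity, pick the smallest $k\geq 1$ with $\mathbf{q}^{(k)}(0)\neq 0$; I claim this vector is a nontrivial proper infinitesimal flex, which will contradict infinitesimal rigidity.

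To verify the claim, differentiate the tangency relation $(r_i(t)+r_j(t))^2=|\mathbf{p}_i(t)-\mathbf{p}_j(t)|^2$ exactly $k$ times and evaluate at $t=0$; by the product rule every cross term carries a factor $\mathbf{q}^{(j)}(0)$ with $1\leq j\leq k-1$ and so vanishes, leaving precisely the first-order flex condition on $\mathbf{q}^{(k)}(0)$. For properness, the Taylor expansion $r_i(t)-r_i = \frac{t^k}{k!}r_i^{(k)}(0)+O(t^{k+1})$ must be $\geq 0$ for small $t>0$ whenever $i\in V^+$, forcing $r_i^{(k)}(0)\geq 0$; analogously $\leq 0$ for $V^-$ and $=0$ for $V^=$. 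For nontriviality, $\mathbf{q}(t)\in\Sigma$ for all $t$ makes $\mathbf{q}^{(k)}(0)$ tangent to $\Sigma$ at $\mathbf{p}$, and by transversality this tangent space meets the space of trivial infinitesimal motions only in $\{0\}$, so $\mathbf{q}^{(k)}(0)$ cannot be the derivative of a congruent motion. The main obstacle I anticipate is exactly this product-rule calculation: while the general principle that the first non-vanishing derivative of an analytic constraint satisfies the linearization of the constraint is classical, writing it out cleanly for the quadratic tangency relation requires keeping careful track of which combinations of lower-order derivatives drop out. Setting up the slice in the first step is a secondary technical point, but routine given freeness of the $G$-action.
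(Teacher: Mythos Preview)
Your argument is correct and follows essentially the same strategy as the paper: both proofs pass to the first non-vanishing derivative of the flex and use the product-rule expansion of the tangency relation to show that this derivative satisfies the infinitesimal flex equations with the correct radius signs. The only real difference is in how the congruent part is stripped off: the paper simply asserts ``without loss of generality we assume the trivial motion up to its highest nontrivial order is zero'' and then proceeds with the Leibniz expansion, whereas you make this step precise via a local slice $\Sigma$ transverse to the Euclidean group orbit and the factorization $\mathbf{p}(t)=g(t)\cdot\mathbf{q}(t)$. Your version is more careful on exactly the point the paper elides, and in particular your transversality argument for nontriviality of $\mathbf{q}^{(k)}(0)$ is a clean replacement for the paper's implicit claim; otherwise the two proofs coincide.
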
 
	\begin{proof}
		If a packing is infinitesimally rigid, then there is no non-trivial proper infinitesimal motion $\mathbf{p}'$ that satisfies $$(\mathbf{p}_i-\mathbf{p}_j)\cdot(\mathbf{p}_i'-\mathbf{p}_j')-(r_i+r_j)(r_i'+r_j')=0$$ 
		
		Let $\mathbf{p}(t)$ be a flex and $n$ be the smallest number such that the $n^{th}$ derivative of $\mathbf{p}(t)$ is nontrivial. Consider the $n^{th}$ derivative of the tangency condition, 
		\begin{equation}
		\begin{split}0=\left(\frac{\partial}{\partial t}\right)^n[(\mathbf{p}_i(t)-\mathbf{p}_j(t))\cdot (\mathbf{p}_i(t)-\mathbf{p}_j(t))-(r_i(t)+r_j(t))(r_i(t)+r_j(t))]\\
		=\sum_{i=0}^n \binom{n}{i}\left(\frac{\partial}{\partial t}\right)^i(\mathbf{p}_i(t)-\mathbf{p}_j(t))\cdot \left(\frac{\partial}{\partial t}\right)^{n-i}\left(\mathbf{p}_i(t)-\mathbf{p}_j(t)\right)\\-\left(\frac{\partial}{\partial t}\right)^i(r_i(t)+r_j(t))\left(\frac{\partial}{\partial t}\right)^{n-i}(r_i(t)+r_j(t))
		\end{split}
		\end{equation}
		
		Without loss of generality, we assume that the first $n-1$ derivatives of $\mathbf{p}(t)$ are all zeroes. Now, if we plug in these $0$s, then we are left with:
		$$(\mathbf{p}_i(t)-\mathbf{p}_j(t))\cdot \left(\frac{\partial}{\partial t}\right)^{n}(\mathbf{p}_i(t)-\mathbf{p}_j(t))-(r_i(t)+r_j(t))\left(\frac{\partial}{\partial t}\right)^{n}(r_i(t)+r_j(t))=0$$
		
		Observe that the $n^{th}$ derivative $\left(\frac{\partial}{\partial t} \right)^n \mathbf{p}(t)$ satisfies the same expression as the one we need for infinitesimal rigidity. Therefore, by induction, if there is no proper non-trivial infinitesimal flex, there cannot be any non-trivial proper flex on its $n^{th}$ derivative. Therefore, there cannot be a proper nontrivial analytical path at $t=0$. 
	\end{proof}

 	The proof of Theorem \ref{firstOrderDuality} does not depend on the dimension, so analogous results are true for sphere packings of any dimension. In particular, we can determine whether or not a 3D ball packing is infinitesimally rigid given its combinatorial structure. For example, if 6 unit spheres form a regular octahedron with a small sphere of radius $\sqrt{2}-1$ inserted in the center, then Theorem \ref{firstOrderDuality} can show this packing is rigid under the constraint that the 6 unit spheres are in $V^+$ and the central small sphere is in $V^-$. 
	
	Now we know that infinitesimal rigidity implies rigidity. Is the reverse true? The answer is negative. It is possible for a packing to be rigid locally with a non-trivial infinitesimal flex. In these cases, infinitesimal rigidity does not give us information on whether or not the structure is rigid. One such example is the packing in Figure \ref{10diskprestress}, where the disks of identical color have the same size.
	\begin{figure}
		\centering
		\includegraphics[height=7.5cm]{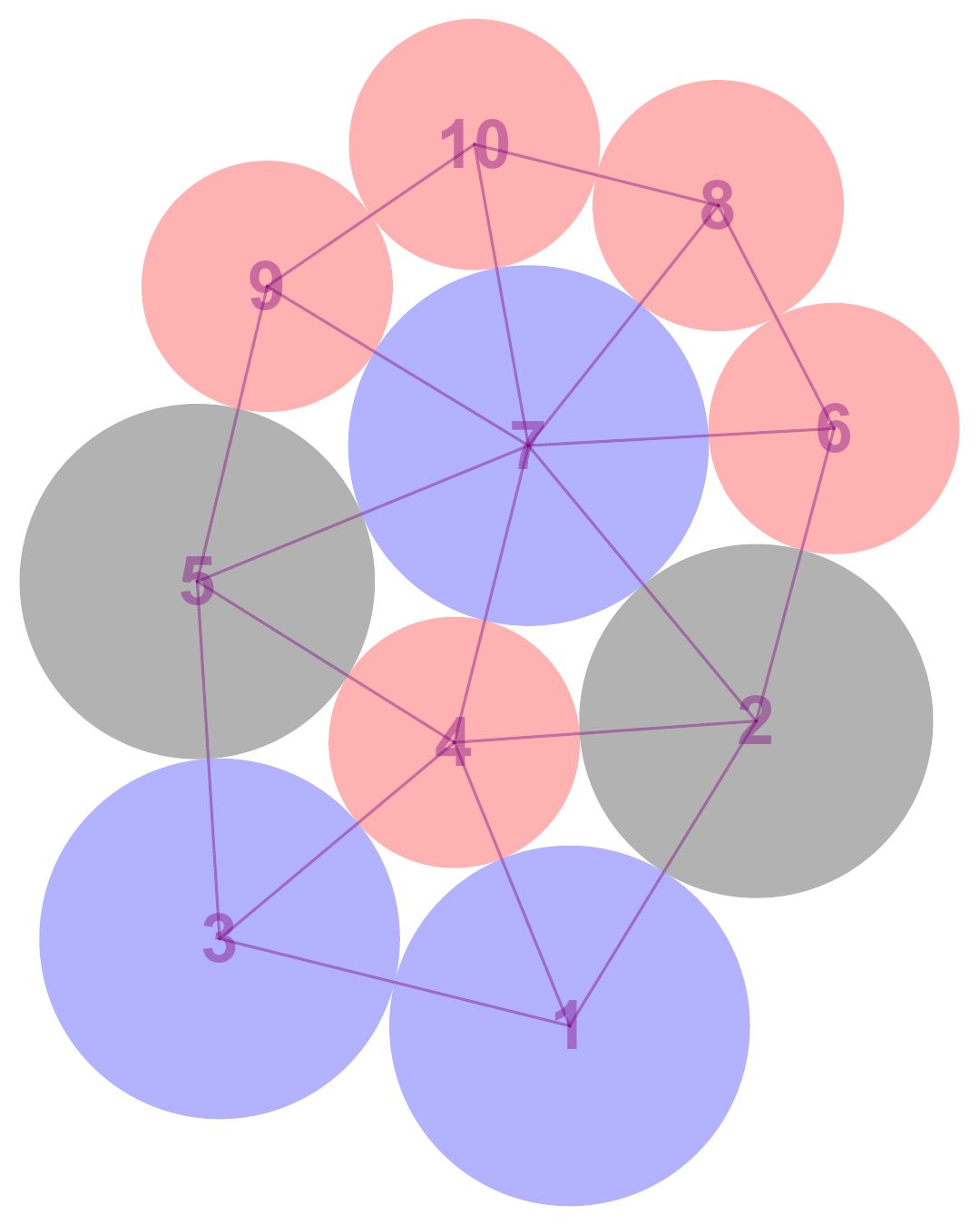}
        \captionsetup{labelsep=colon,margin=1.3cm}
		\caption{a packing that is infinitesimally flexible, but rigid}
		\label{10diskprestress}
	\end{figure}
    \begin{subsection}{A case study of a globally rigid packing}
	We can use some of the ideas from \cite{bauer_stephenson_wegert_2011} to analyze Figure \ref{10diskprestress}. If we use the boundary radii as a basis to determine interior radii, then due to symmetry disks 2 and 5 must have identical partial derivatives with respect to the interior. This means increasing the size of disk 2, decreasing the size of disk 5 at the same rate, and keeping all other radii fixed should be part of a valid infinitesimal motion. This can be easily verified by computing the dimension of the null space of the extended rigidity matrix of this packing, which is 4. 
 
	There are essentially two constraints that are determined only by the radii - the angle sums around disk 4 and 7 must be $2\pi$. We define the following function that maps 3 radii of a triangle to the angle:
	\begin{equation*}
	    \alpha(x,y,z)=\cos ^{-1}\left(\frac{(x+y)^2+(x+z)^2-(y+z)^2}{2 (x+y) (x+z)}\right)
	\end{equation*}
	\begin{figure}
		\centering
		\includegraphics[scale=0.4]{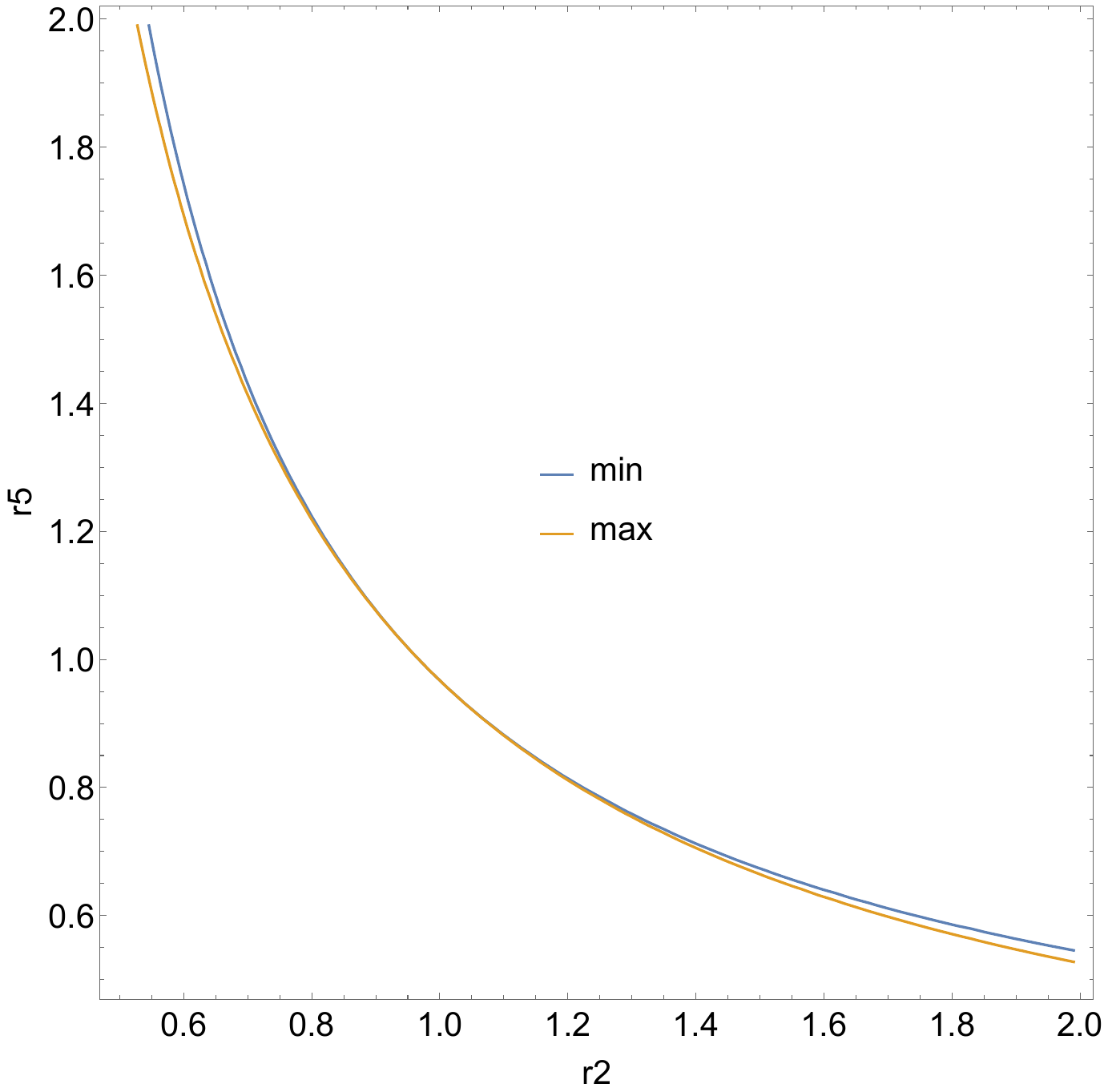}
        \captionsetup{labelsep=colon,margin=1.3cm}
		\caption{The contour plot for minimal and maximal $r_5$ at given $r_2$}
		\label{minmax}
	\end{figure}
	It is well known $\alpha(x,y,z)$ is monotonically increasing with $y$ and $z$ while monotonically decreasing with $x$\cite{kstep}. Using these monotonic relations, we can bound one of $r_2$ and $r_5$ given the other. Consider all possible packings where blue disks are not bigger and red disks are not smaller. Fixing the radius of disk 2, the radius of disk 5 is minimal when the radii of disks 1,3,4, and 7 are fixed (otherwise, the angles around disk $7$ cannot sum up to $2\pi$ due to monotonicity previously mentioned). Similarly, the radius of disk 5 is maximal when the radii of disks 4,7,6,8,9, and 10 are fixed. As a result, we can compute the minimal and maximal value of $r_5$ given $r_2$. The minimal value of $r_5$ satisfies the following.
	\begin{equation*}
	    \alpha(r_4,1,1)+2\alpha(r_4,r_5,1)+2\alpha(r_4,r_2,1)=2\pi
	\end{equation*}
	Similarly, the maximal value of $r_5$ must satisfy:
	\begin{equation*}
	    3\alpha(1,r_4,r_4)+2\alpha(1,r_4,r_2)+2\alpha(1,r_4,r_5)=2\pi
	\end{equation*}
	The two contours are shown in Figure \ref{minmax}.
	Denote the radius $r_4$ as $q$. Solving these equations, we have the following lower and upper bounds for $r_5$:
	\begin{equation}\label{minr}
	   \min r_5= \frac{-2 q (q+1) \cos ^2\left(\frac{1}{4} \left(\cos ^{-1}\left(\frac{q^2+2 q-1}{(q+1)^2}\right)+2 \cos ^{-1}\left(\frac{(q-1) r_2+q (q+1)}{(q+1) \left(q+r_2\right)}\right)\right)\right)}{(q+1) \cos \left(\frac{1}{2} \cos ^{-1}\left(\frac{q^2+2 q-1}{(q+1)^2}\right)+\cos ^{-1}\left(\frac{(q-1) r_2+q (q+1)}{(q+1) \left(q+r_2\right)}\right)\right)+q-1}
	\end{equation}
	\begin{equation}\label{maxr}
	   \max r_5= \frac{-2 (q+1) \cos ^2\left(\frac{1}{4} \left(3 \cos ^{-1}\left(\frac{-q^2+2 q+1}{(q+1)^2}\right)+2 \cos ^{-1}\left(\frac{q \left(-r_2\right)+q+r_2+1}{q r_2+q+r_2+1}\right)\right)\right)}{(q+1) \cos \left(\frac{3}{2} \cos ^{-1}\left(\frac{-q^2+2 q+1}{(q+1)^2}\right)+\cos ^{-1}\left(\frac{q \left(-r_2\right)+q+r_2+1}{q r_2+q+r_2+1}\right)\right)-q+1}
	\end{equation}
	\begin{figure}
		\centering
		\includegraphics[height=7cm]{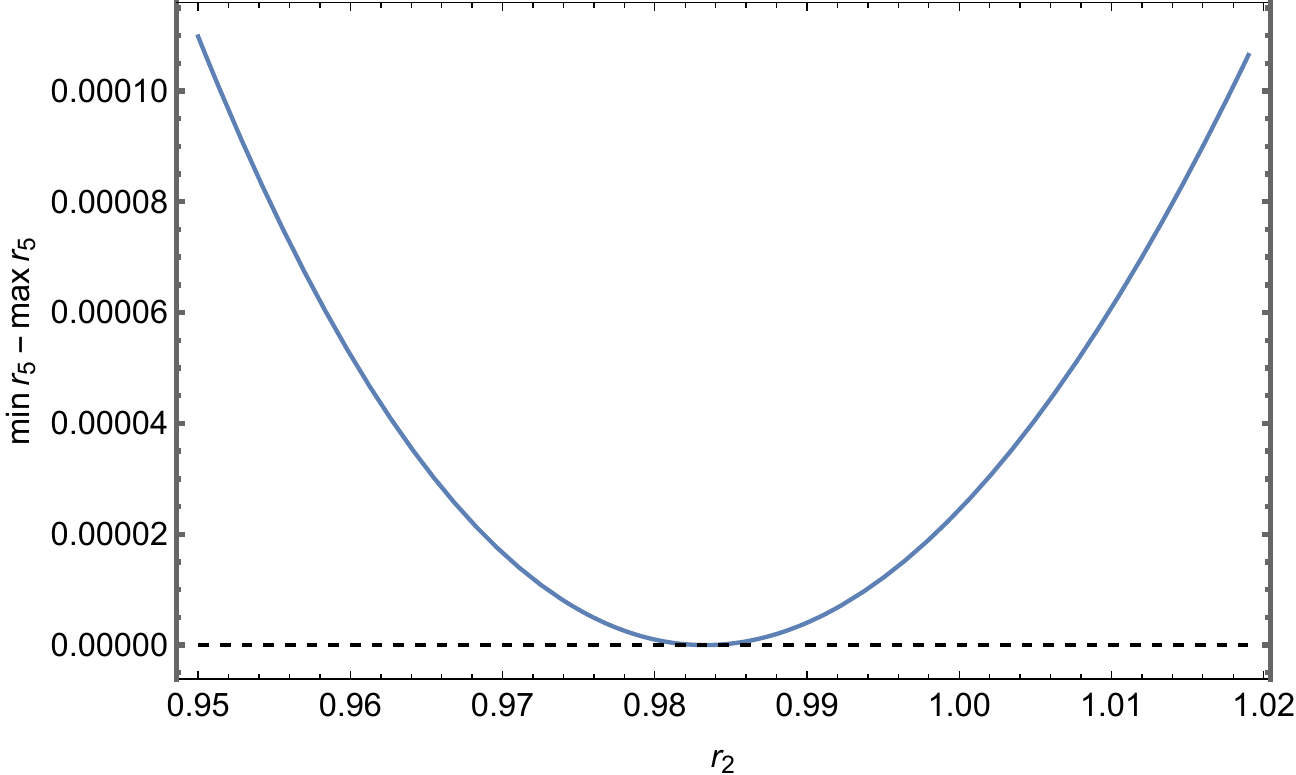}
        \captionsetup{labelsep=colon,margin=1.3cm}
		\caption{The value for $\min r_5-\max r_5$ at given $r_2$}
		\label{feasibleinterval}
	\end{figure}
Figure \ref{feasibleinterval} gives a closer view of $(\min r_5-\max r_5)$ near the intersection point. We can see that the interval $[r_{min},r_{max}]$ is not empty at one point when the two contours intersect in the configuration in Figure \ref{minmax}. The exact values of $q=r_4$ and $r_2$ at the intersection point are as follows. $r_4$ is a real root approximately $0.697$ for
  \begin{align*}
      &529 x^{20}-10812 x^{19}+67214 x^{18}-178684 x^{17}+402509 x^{16}-425904 x^{15}\\&-234840 x^{14}-949488 x^{13}-181390 x^{12}+2943928 x^{11}+2762772 x^{10}\\&-1255176 x^9-2284014 x^8-317040 x^7+527784 x^6+216016 x^5\\&-6915 x^4-23388 x^3-5362 x^2+548 x+289
  \end{align*} 
  $r_2$ is a real root of $y$ approximately $0.983$ for 
  \begin{align*}
      &16 x^6 y-x^6+48 x^5 y^2+44 x^5 y-2 x^5+48 x^4 y^3+58 x^4 y^2+40 x^4 y-x^4\\&+16 x^3 y^4-20 x^3 y^3-28 x^3 y^2+12 x^3 y-33 x^2 y^4-56 x^2 y^3-38 x^2 y^2\\&+14 x y^4+12 x y^3-y^4
  \end{align*}
  where $x$ is the $r_4$ value above. 

    From Equation \ref{minr}, when $r_2\le0.96$, $r_5$ must be larger than $1$ so we only have to consider the packings where $0.96<r_2\le1$. After plugging in the value for the radii given by Figure \ref{10diskprestress} into $(\min r_5-\max r_5)$, the function value and its first derivatives is $0$ at the given value (computed symbolically with Mathematica) while the second derivative is positive on interval $0.96<r_2<1$ (verified numerically using Mathematica with arbitrary precision so that the output error is within $10^{-16}$. The smallest value of $\frac{d^2}{dr_2^2}((\min r_5-\max r_5))$ is bigger than $0.47$). Therefore, the function $(\min r_5-\max r_5)$ is convex on $0.96<r_2<1$ with a critical zero point, so the zero point is unique. Having exactly one nonpositive point means that Figure \ref{10diskprestress} is globally rigid. These calculations give the following result: 
    \begin{proposition}
        The packing shown in Figure \ref{10dgen} has globally maximal $q=\frac{r_{min}}{r_{max}}$ among any packing of its graph. 
    \end{proposition}
    These calculations are done with Wolfram Mathematica 14.0\cite{WMathematica}. 
    \end{subsection}
\section{The Combinatorics}\label{sectionComb}
    This section discusses the combinatorics of circle packing so we can discuss generic circle packings later. 
	\subsection{The dimension count}
	If a planar packing $(G,\mathbf{p})$ has $n$ vertices and $m$ edges, by Proposition \ref{kerneldimension} the kernel of $R(\mathbf{p})$ always has dimension $3n-m$\cite{sticky_disc}. The trivial motions generate a tangent space of dimension 3 unless there is only a single disk. In order to make a packing infinitesimally rigid, we need to reduce the degree of freedom by $3n-m-3$. Since we need $k$ equalities or $k+1$ inequalities to reduce the degree of freedom by $k$, we need at least $3n-m-3$ fixed disks, or $3n-m-2$ constraints including inequalities, to make a packing infinitesimally rigid. 
	
	In the special case where the packing is simple and triangulated, we can use Euler's formula to show 
	\begin{equation}\label{boundary}
	    3n-m-3=b
	\end{equation} where $b$ is the number of disks on the boundary.  Euler's characteristic gives 
	\begin{equation}\label{euler}
	    n-m+f=1
	\end{equation} where $f$ is the number of triangular faces. Other than the boundary edges, every other edge is used on two triangular faces. This gives
	\begin{equation}\label{triangles}
	    3f=2m-b
	\end{equation} Substituting equation (\ref{triangles}) into (\ref{euler}) gives (\ref{boundary}). This is consistent with the result proved in \cite{bauer_stephenson_wegert_2011} that such a packing is globally uniquely determined by the boundary radii. In Figure \ref{10diskprestress}, there are 8 disks on the boundary and 8 inequality constraints. Therefore, it is clear that Figure \ref{10diskprestress} cannot be infinitesimally rigid simply by counting.
	
	In summary, the infinitesimal degree of freedom (dimension of the kernel of $R(\mathbf{p})$) is always $\dim\ker(R(\mathbf{p}))-3$. If the packing is simple and triangulated, then this number is equal to the number of disks on the boundary $b$. For infinitesimal rigidity, at least $b$ equality or $b+1$ inequality constraints are needed. In the next subsection, we determine some combinatorial properties of the set of disks that infinitesimally rigidify a given packing. 
	
	\subsection{Maximal independent set}
	To understand the structure of disks that can rigidify a given packing, the concept of an independent set can be helpful. Intuitively, if the radius $r_i$ is determined by the radii in some set of disks $S$, then $r_i$ does not provide more information once all the radii in $S$ are known. Since we focus on the set of disks that rigidify the packing infinitesimally, linear independence makes sense. 
	\begin{defn}
	Given a packing $(G,\mathbf{p})$, a set $S$ of disks is \textbf{linearly independent} if the row rank of $\begin{bmatrix}R(\mathbf{p})\\ E_S\end{bmatrix}$ is $m+|S|$, where $m$ is the number of edges. Otherwise, we say $S$ is linearly dependent. An independent set $S$ is \textbf{maximal} if there does not exist a disk $d_i$ such that $S\cup\{d_i\}$ is linearly independent. 
	\end{defn}
	Intuitively, if a subset is linearly dependent, then some radius is determined by other radii infinitesimally. To see how this definition of independence works, suppose that the row rank is not maximal; there exists $e_i$ where $i\in S$ is in the row space of $\begin{bmatrix}R(\mathbf{p})\\ E_{S-\{i\}}\end{bmatrix}$, then there is a vector $\omega$ in the left null space of $\begin{bmatrix}R(\mathbf{p})\\ E_S\end{bmatrix}$. According to Equation \ref{3product}, we can assign $V^+$,$V^-$, and $V^=$ based on the signs of $\omega$ so that no infinitesimal flex can vary the radii of these disks in a proper way. However, since we do not have the fixed radius condition in Theorem \ref{firstOrderDuality}, the packing may not be rigid. In this case, only the radii in the subset $S$ are guaranteed to be infinitesimally rigid while the rest of the packing can flex. 
	 
	 \begin{figure}
	     \centering
	    \includegraphics[height=7.5cm]{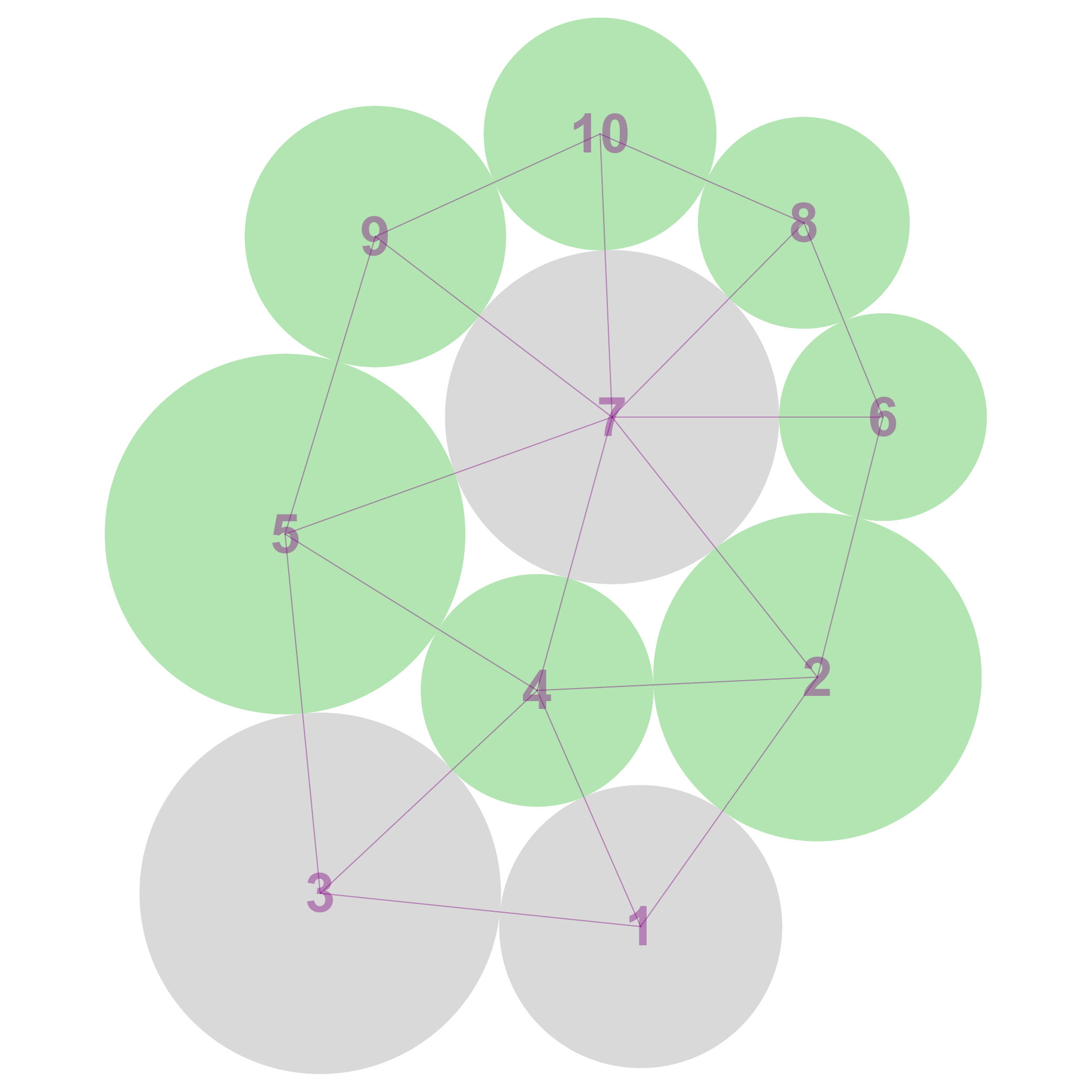}
        \captionsetup{labelsep=colon,margin=1.3cm}
		\caption{The green disks in \{2,4,5,6,8,9,10\} form a linearly independent set that determines $r_7$ infinitesimally. This does not make it infinitesimally rigid, as disk $1$ and $3$ can still vary their sizes.}
	     \label{10dgen}
	 \end{figure}
	
	An independent set $S$ is maximal if the radii in $S$ determine all the radii infinitesimally, that is, $\begin{bmatrix}R(\mathbf{p})\\ E_S\end{bmatrix}$ and $\begin{bmatrix}R(\mathbf{p})\\ E_V\end{bmatrix}$ have the same rank where $V$ is the whole vertex set. Observe that $\begin{bmatrix}R(\mathbf{p})\\ E_V\end{bmatrix}$ has the same kernel as the submatrix produced by removing all columns corresponding to the radii. Hence, the kernel is the set of infinitesimal motions of the bar framework. An example is given in Figure \ref{10dgen}. The set $S$ of green disks is linearly independent but not maximal. $S\cup \{7\}$ is neither maximal nor linearly independent. $S\cup \{1\}$ is both maximal and linearly independent. 
	
	Next, we focus on the packings with an infinitesimally rigid bar framework. In these packings $\begin{bmatrix}R(\mathbf{p})\\ E_V\end{bmatrix}$ has a 3-dimensional kernel. $R(\mathbf{p})$ has a $3n-m$ dimensional kernel. Therefore, any maximal linearly independent set must have $3n-m-3$ disks. The following should now be obvious:
	
	\begin{proposition}\label{maximalind}
	Given a packing $(G,\mathbf{p})$ with an infinitesimally rigid bar framework, if $S=V^=$ is a maximal set and the complement $S^C=V^0$, then the packing is infinitesimally rigid. 
	\end{proposition}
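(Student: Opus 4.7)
The plan is a direct kernel comparison, which turns out to be tighter than routing through the stress existence condition of Theorem \ref{firstOrderDuality}. First I would observe that since $V^+=V^-=\emptyset$, the notion of a proper infinitesimal flex collapses: there are no sign inequalities left, so a proper flex $\mathbf{p}'$ is simply any element of $\ker\begin{bmatrix}R(\mathbf{p})\\ E_S\end{bmatrix}$.

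Next I would invoke the characterization of maximality stated just before the proposition, namely $\operatorname{rank}\begin{bmatrix}R(\mathbf{p})\\ E_S\end{bmatrix}=\operatorname{rank}\begin{bmatrix}R(\mathbf{p})\\ E_V\end{bmatrix}$. Combined with the obvious inclusion $\ker\begin{bmatrix}R(\mathbf{p})\\ E_V\end{bmatrix}\subseteq\ker\begin{bmatrix}R(\mathbf{p})\\ E_S\end{bmatrix}$ (adding rows can only shrink a kernel), equality of ranks upgrades to equality of kernel dimensions, hence equality of the kernels themselves. Thus any proper infinitesimal flex automatically fixes \emph{every} radius, not just those indexed by $S$.

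The last step is to invoke the bar-framework hypothesis: $\ker\begin{bmatrix}R(\mathbf{p})\\ E_V\end{bmatrix}$ is the three-dimensional space of derivatives of congruent motions. Hence $\mathbf{p}'$ is trivial, and the packing is infinitesimally rigid. There is essentially no obstacle here; both hypotheses are used exactly once in a single dimension count, and the step that carries the content is the passage from equal ranks to equal kernels. As a sanity check one can also reproduce the conclusion through Theorem \ref{firstOrderDuality}: the fixed radius condition becomes $\dim\ker\begin{bmatrix}R(\mathbf{p})\\ E_S\end{bmatrix}=3$, which is exactly what the rank comparison establishes, while the stress existence condition holds vacuously with $\omega\equiv 0$ since $V^+\cup V^-=\emptyset$.
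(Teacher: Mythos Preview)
Your proof is correct and follows essentially the same approach as the paper: both arguments compare the kernels of $\begin{bmatrix}R(\mathbf{p})\\ E_S\end{bmatrix}$ and $\begin{bmatrix}R(\mathbf{p})\\ E_V\end{bmatrix}$, use maximality to identify their dimensions, and invoke the bar-framework hypothesis to pin that dimension at $3$. Your version is slightly more explicit about the rank-to-kernel passage and adds the helpful observation that with $V^+=V^-=\emptyset$ the stress existence condition of Theorem~\ref{firstOrderDuality} is vacuous, but the underlying argument is the same dimension count.
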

	\begin{proof}
	Since the bar framework is infinitesimally rigid, the kernel of $\begin{bmatrix}R(\mathbf{p})\\ E_V\end{bmatrix}$ has dimension 3. Since $S$ is maximal, no radii can be added to $S$ to further reduce the dimension of its kernel, so the kernel of $\begin{bmatrix}R(\mathbf{p})\\ E_S\end{bmatrix}$ also has dimension 3. The three dimensions in the kernel can only be trivial infinitesimal motions. 
	\end{proof}
	
	The maximal linearly independent sets of a packing $(G,\mathbf{p})$ have an obvious structure which guarantees that many optimization problems can be solved efficiently over these sets: 
	
	\begin{proposition}
	Given a packing $(G,\mathbf{p})$, linearly independent sets form a matroid. 
	\end{proposition}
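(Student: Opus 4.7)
The plan is to realize the family of linearly independent sets as the independent sets of a linear matroid on a quotient vector space, from which the matroid axioms will follow automatically. By Proposition~\ref{kerneldimension}, the $m$ rows of $R(\mathbf{p})$ are themselves linearly independent in $\mathbb{R}^{3n}$, so we may form the quotient space $W := \mathbb{R}^{3n}/\mathrm{rowspace}(R(\mathbf{p}))$ of dimension $3n-m$, together with the canonical projection $\pi : \mathbb{R}^{3n} \to W$. To each disk $i \in V$ I would assign the vector $v_i := \pi(e_i) \in W$, where $e_i$ is the standard basis row indicator of the $r_i$ coordinate.

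The key step is to translate the paper's row-rank criterion into the language of $W$. A set $S \subseteq V$ is linearly independent in the sense of the definition iff $\begin{bmatrix}R(\mathbf{p})\\ E_S\end{bmatrix}$ has row rank $m + |S|$. Because $R(\mathbf{p})$ already contributes $m$ independent rows, this is equivalent to $\{e_i\}_{i \in S}$ being linearly independent modulo $\mathrm{rowspace}(R(\mathbf{p}))$, and hence to $\{v_i\}_{i \in S}$ being a linearly independent family in $W$. Thus the paper's linearly independent sets are exactly the independent sets of the linear matroid $M$ on ground set $V$ defined by the vector assignment $i \mapsto v_i$. Once this identification is in place, the matroid axioms are immediate: $\emptyset$ is independent; any subfamily of a linearly independent family is linearly independent (hereditary property); and the augmentation/exchange axiom is the Steinitz exchange lemma in the finite-dimensional space $W$.

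The only place where anything specific to circle packings enters the argument is the translation between the row-rank formulation and the quotient-space formulation, which depends on $R(\mathbf{p})$ having full row rank. That hypothesis is supplied by Proposition~\ref{kerneldimension}, and once it is invoked the conclusion is entirely standard linear-matroid theory. There is therefore no substantial obstacle; the proof is essentially a bookkeeping translation followed by an appeal to the fact that linear matroids are matroids.
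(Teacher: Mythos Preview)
Your argument is correct and is essentially the same approach as the paper's: both identify the independent sets with those of a linear matroid built from the rows $e_i$ modulo the row space of $R(\mathbf{p})$. The paper verifies the three independence axioms directly via row-space reasoning, whereas you package the same observation as a quotient-space linear matroid and then invoke the standard fact that linear matroids are matroids; the content is identical, and your explicit use of Proposition~\ref{kerneldimension} simply makes visible the hypothesis (full row rank of $R(\mathbf{p})$) that the paper uses implicitly when asserting that $\emptyset$ is independent.
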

	\begin{proof}
	There are many equivalent ways to define a matroid. Here, we use the independent sets to define it. The empty set is independent. The subset of an independent set is clearly independent. Suppose $A$ and $B$ are independent sets such that $|A|>|B|$, then $\begin{bmatrix}R(\mathbf{p})\\ E_A\end{bmatrix}$ has a row $e_k$ not in the row space of $\begin{bmatrix}R(\mathbf{p})\\ E_B\end{bmatrix}$. Then $B\cup \{k\}$ is independent. 
	\end{proof}
	
	Given a packing $(G,\mathbf{p})$ with infinitesimally rigid bar framework and a non-negative real cost function $f:V\to\mathbb{R}^+$ of rigidifying the radius of each disk, then the minimum cost to infinitesimally rigidify the packing can be computed quickly. It is well known that an independent set with minimum cost can always be computed through greedy algorithm if and only if the independent sets form a matroid\cite{matroidGreedy}. Many types of optimization problems can be solved quickly on a matroid\cite{submodularmatroid}. 
\section{Generic Rigidity}\label{sectionGeneric}
    The ideas of this section come primarily from the paper by Connelly and Gortler\cite{sticky_disc}. 
	In \cite{sticky_disc}, it was proved that if a packing of $G$ has the properties $V=V^=$, $m=2n-3$, and all radii are generic (algebraically independent), then it is infinitesimally rigid. This result can be generalized to packings with more than $2n-3$ edges and with fewer fixed radii. The following semi-algebraic version of Sard's Theorem is proved in \cite{sticky_disc}:
	
	\begin{thm}\label{sard}
	Let $X$, $Y$ be smooth semi-algebraic manifolds of dimensions $d_1$ and $d_2$ over $\mathbb{Q}$ and $f:X\to Y$ a rational map. Then the critical values of $f$ are a semi-algebraic subset of $Y$, defined over $\mathbb{Q}$, and of strictly lower dimension than $d_2$.
	\end{thm}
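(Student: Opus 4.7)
The plan is to combine classical Sard's theorem with the Tarski--Seidenberg projection theorem. First I would identify the critical set $\mathrm{Crit}(f)$ explicitly. Working in rational local charts, which exist because $X$ and $Y$ are smooth semi-algebraic manifolds over $\mathbb{Q}$, the Jacobian of $f$ is a matrix of rational functions with coefficients in $\mathbb{Q}$. The critical locus is the set where the rank of $df$ is strictly less than $d_2$, equivalently where all $d_2\times d_2$ minors of the Jacobian vanish; after clearing denominators this becomes a finite system of polynomial equations over $\mathbb{Q}$. Consequently $\mathrm{Crit}(f)$ is a semi-algebraic subset of $X$ defined over $\mathbb{Q}$.

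Next I would push $\mathrm{Crit}(f)$ forward to $Y$. The graph of $f$ is a semi-algebraic subset of $X\times Y$ defined over $\mathbb{Q}$; intersecting it with $\mathrm{Crit}(f)\times Y$ and projecting onto the second factor gives the set of critical values. Tarski--Seidenberg guarantees that the image of a semi-algebraic set under coordinate projection is semi-algebraic, and the property of being defined over $\mathbb{Q}$ is preserved because all polynomials involved have rational coefficients. For the dimension bound I would invoke classical Sard: on its (open, full-measure) domain of definition $f$ is $C^\infty$, so the critical values form a Lebesgue-null subset of $Y$. A standard structural fact about semi-algebraic sets is that $S\subseteq Y$ has $\dim S=d_2$ if and only if $S$ contains a nonempty open subset of $Y$, equivalently if and only if $S$ has positive Lebesgue measure; being Lebesgue null therefore forces $\dim S<d_2$.

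The main obstacle will be cleanly handling the indeterminacy locus of $f$ together with the possibility that the source is lower-dimensional than the target. When $d_1<d_2$ the image $f(X)$ already has dimension at most $d_1<d_2$ and the claim is automatic. When $d_1\ge d_2$, I would apply the semi-algebraic cell decomposition theorem to stratify $X$ into finitely many smooth cells over $\mathbb{Q}$ on which $f$ is regular. On each cell $C$ of dimension $k$: if $k<d_2$ then $f(C)$ is semi-algebraic of dimension at most $k<d_2$; if $k\ge d_2$ then Sard applied to the smooth restriction $f|_C$ shows that its critical values form a measure-zero, hence dimension $<d_2$, semi-algebraic set over $\mathbb{Q}$. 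A finite union of semi-algebraic sets of dimension strictly less than $d_2$ still has dimension strictly less than $d_2$, so taking the union of the contributions from all cells preserves both semi-algebraicity over $\mathbb{Q}$ and the dimension bound, completing the argument.
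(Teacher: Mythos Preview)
The paper does not prove this theorem; it is quoted from \cite{sticky_disc} (Connelly--Gortler) and used as a black box in the subsequent genericity argument, so there is no ``paper's own proof'' to compare against. Your outline---exhibit $\mathrm{Crit}(f)$ as a semi-algebraic set over $\mathbb{Q}$ via the vanishing of the $d_2\times d_2$ Jacobian minors, push it forward by Tarski--Seidenberg, and then combine classical Sard with the fact that a full-dimensional semi-algebraic set must contain a nonempty open set---is the standard route and is essentially correct.

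One small simplification: once you restrict to the open semi-algebraic domain $U\subset X$ on which the rational map is actually defined, $f|_U$ is already a $C^\infty$ map between smooth manifolds of the stated dimensions, so classical Sard applies directly to $f|_U$ and gives that $f(\mathrm{Crit}(f))$ has measure zero in $Y$. The indeterminacy locus contributes no critical values because $f$ is simply undefined there, so the cell-decomposition stratification in your final paragraph is not needed. Your concern about local charts being ``rational'' can likewise be bypassed by working extrinsically: with $X\subset\mathbb{R}^N$ and $Y\subset\mathbb{R}^M$ embedded, the tangent spaces vary semi-algebraically over $\mathbb{Q}$, and the rank condition on $df$ is a first-order formula over $\mathbb{Q}$ in the ambient coordinates.
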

	
	The goal is to prove the following theorem:
	
	\begin{thm}
	Let $(G=(V,E),\mathbf{p})$ be a planar packing with $n>1$ vertices and $m\ge 2n-3$ edges. If there exists $S\subset V$ such that $|S|=3n-m-3$ and the radii of the disks in $S$ are generic, then the packing is infinitesimally rigid by fixing the radii in $S$ and freeing the radii not in $S$. 
	\end{thm}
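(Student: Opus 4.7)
The plan is to apply Theorem \ref{sard} (semi-algebraic Sard) to the coordinate projection $\pi_S:\mathcal{P}_G\to\mathbb{R}^{|S|}$, where $\mathcal{P}_G\subset\mathbb{R}^{3n}$ denotes the semi-algebraic variety of all planar packings of $G$ and $\pi_S$ reads off the radii of the disks in $S$. The first step is to use Proposition \ref{kerneldimension} as a constant-rank statement: since $\dim\ker R(\mathbf{q})=3n-m$ at every packing $\mathbf{q}$, the $m$ gradient vectors of the tangency relations $(r_i+r_j)^2=(x_i-x_j)^2+(y_i-y_j)^2$ are everywhere linearly independent. The implicit function theorem then exhibits $\mathcal{P}_G$ as a smooth semi-algebraic submanifold of $\mathbb{R}^{3n}$, defined over $\mathbb{Q}$ because the tangency polynomials have rational coefficients, of dimension $3n-m$, with tangent space $T_\mathbf{q}\mathcal{P}_G=\ker R(\mathbf{q})$.

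Next, Theorem \ref{sard} applied to the rational map $\pi_S$ yields a semi-algebraic subset $C\subset\mathbb{R}^{|S|}$ of critical values, defined over $\mathbb{Q}$ and of dimension strictly less than $|S|=3n-m-3$. Because the radii $(r_i)_{i\in S}$ at $\mathbf{p}$ are algebraically independent over $\mathbb{Q}$, they cannot lie in any proper $\mathbb{Q}$-semi-algebraic subset of $\mathbb{R}^{|S|}$, so $\pi_S(\mathbf{p})\notin C$ and $\mathbf{p}$ is a regular point of $\pi_S$. Hence $d\pi_S|_\mathbf{p}$ is surjective and
$$\dim\ker d\pi_S|_\mathbf{p}=(3n-m)-(3n-m-3)=3.$$
But $\ker d\pi_S|_\mathbf{p}$ is exactly the kernel of the extended matrix $\begin{bmatrix}R(\mathbf{p})\\ E_S\end{bmatrix}$, i.e.\ the space of infinitesimal flexes that fix every radius in $S$. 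This 3-dimensional kernel already contains the three linearly independent trivial infinitesimal motions (two translations and one rotation), all of which preserve radii, so it consists of nothing but trivial motions.

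To conclude, I would set $V^=:=S$, $V^0:=V\setminus S$, and $V^+=V^-=\emptyset$ and invoke Theorem \ref{firstOrderDuality}. The fixed radius condition is exactly the statement proved in the previous paragraph, while the stress existence condition holds vacuously with $\omega\equiv 0$ because there are no disks in $V^+\cup V^-$ to constrain. Theorem \ref{firstOrderDuality} then delivers infinitesimal rigidity.

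The main obstacle I anticipate is justifying the very first step, i.e.\ showing that $\mathcal{P}_G$ really is a smooth semi-algebraic $\mathbb{Q}$-manifold of the expected dimension in a form that Theorem \ref{sard} can ingest. Proposition \ref{kerneldimension} gives constant Jacobian rank, but one must still verify that this globally rules out embedded singularities and that the smooth structure is defined over $\mathbb{Q}$ rather than over $\mathbb{R}$; once this is in place, the genericity/Sard step and the dimension-drop calculation are essentially formal.
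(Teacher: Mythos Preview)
Your proposal is correct and follows essentially the same route as the paper: apply semi-algebraic Sard to the radius-projection $\pi_S$ from the $(3n-m)$-dimensional packing manifold, use genericity to land outside the critical-value set, and count dimensions to get a 3-dimensional kernel of trivial motions. Your final invocation of Theorem~\ref{firstOrderDuality} is harmless but redundant---once you have shown $\ker\begin{bmatrix}R(\mathbf{p})\\ E_S\end{bmatrix}$ consists only of trivial motions, infinitesimal rigidity with $V^{=}=S$, $V^0=V\setminus S$ is immediate from the definition.
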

	
	\begin{proof}
	Let $X_G$ be the set of packings of some fixed orientation defined by the relations $$(x_i-x_j)^2+(y_i-y_j)^2=(r_i+r_j)^2\hspace{1cm}\forall e_{ij}\in E$$  $$r_i>0\hspace{1cm}\forall i\in V$$ It is a smooth manifold of dimension $3n-m$ according to \cite{bauer_stephenson_wegert_2011}. This is clearly a semi-algebraic manifold. Consider the following projection:
	\begin{equation*}
	    f: X_G\to \mathbb{R}^{|S|}
	\end{equation*}
	which maps a packing $\mathbf{p}$ to the radii in $S$. $f$ is a polynomial map. Theorem \ref{sard} states that the critical values of $f$ cannot be generic. Therefore, a generic point must be regular. In particular, $f(\mathbf{p})$ is a regular value. Consider the tangent map:
	
	\begin{equation*}
	    Df: TX_G\to T\mathbb{R}^{|S|}
	\end{equation*}
	
	Since $f(\mathbf{p})$ is a regular value, $\mathbf{p}$ is a regular point. As a result, $Df$ is surjective at $\mathbf{p}$. The dimension of the tangent space of $X_G$ at $\mathbf{p}$, $TX_G$, is $3n-m$ and the dimension of $T\mathbb{R}^{|S|}$ is $|S|=3n-m-3$. Therefore, the kernel has dimension $3n-m-(3n-m-3)=3$. There are always three dimensions of infinitesimal motions at $\mathbf{p}$, which can only be generated by trivial infinitesimal motions. Therefore, the packing $(G,\mathbf{p})$ is infinitesimally rigid. 
	\end{proof}
	
	This result is not too surprising. Since being generic is stronger than linear independence, $3n-m-3$ generic radii are sufficient for infinitesimal rigidity, just as the same number of linearly independent radii is sufficient in proposition \ref{maximalind}. 
	
	Take the set of red and blue disks in Figure \ref{10diskprestress} as an example. If the radii in $V^+\cup V^-$ are generic in another packing of $G$, fixing these radii would be infinitesimally rigid.  The nontrivial infinitesimal motion is caused by linear dependency in a special packing. In this case, it is the reflectional symmetry along the line joining $\mathbf{p}_4$ and $\mathbf{p}_7$. In the next section we will present a method to deal with a special packing that generically should be rigid but is infinitesimally flexible due to being a special arrangement. 
\section{Prestress Stability and Second-order Rigidity}\label{section2ndOrder}
	Suppose a packing $(G,\mathbf{p})$ has a nontrivial infinitesimal flex, it can still be rigid. One such example is given in Figure \ref{10diskprestress} discussed in the case of global rigidity. Our approach for proving the rigidity of Figure \ref{10diskprestress} is not practical for larger packings because finding explicit relationships among radii becomes very difficult as the number of disks grows. This section introduces the idea of second-order rigidity and prestress stability. Taking the second derivative of the tangency constraint, we have:
	\begin{equation*}
	    (\mathbf{p}_i-\mathbf{p}_j)(\mathbf{p}''_i-\mathbf{p}''_j)+(\mathbf{p}'_i-\mathbf{p}'_j)^T(\mathbf{p}'_i-\mathbf{p}'_j)-(r_i+r_j)(r''_i+r''_j)-(r'_i+r'_j)(r'_i+r'_j)=0
	\end{equation*}
	Rearranging the terms gives the following equality:
	\begin{equation}\label{secondorder}
	(\mathbf{p}_i-\mathbf{p}_j)(\mathbf{p}''_i-\mathbf{p}''_j)-(r_i+r_j)(r''_i+r''_j)=(r'_i+r'_j)^2-(\mathbf{p}'_i-\mathbf{p}'_j)^T(\mathbf{p}'_i-\mathbf{p}'_j)
	\end{equation}
	
	Intuitively, the packing would be second-order rigid if no nontrivial proper infinitesimal motion can be extended to a \textquotedblleft proper" second-order motion. To define \textquotedblleft proper" for $\mathbf{p}''$, the constraints on the radii in $\mathbf{p}''$ need to be modified based on a given proper infinitesimal flex $\mathbf{p}'$. If disk $i$ is in $V^+$ and $r'_i>0$, then $r''_i$ should not be constrained because locally $r_i$ increases due to $r'_i>0$ even if $r''_i<0$. Therefore, we need to move disks that strictly change their radii infinitesimally from $V^+$ and $V^-$ to $V^0$. Let us form a new partition based on a given proper $\textbf{p}'$ in the following way: 
    
    $$\tilde{V}^+:=\{i\mid i\in V^+\land r_i'=0 \}$$
    $$\tilde{V}^-:=\{i\mid i\in V^-\land r_i'=0 \}$$
    $$\tilde{V}^=:=V^=$$
    $$\tilde{V}^0:=V^0\cup\{i\mid i\in V^+\land r_i'>0\}\cup\{i\mid i\in V^-\land r_i'<0\}$$
    
    The new partition now is $V=\tilde{V}^+\sqcup \tilde{V}^-\sqcup \tilde{V}^= \sqcup \tilde{V}^0$. Let $R'_e(\mathbf{p})$ be the extended rigidity matrix with the new partition, $\begin{bmatrix}R(\mathbf{p})\\ E_{\tilde{V}^+\cup \tilde{V}^-\cup  \tilde{V}^=}\end{bmatrix}$. 
	\begin{defn}
	Given a packing $(G,\mathbf{p})$ and $\mathbf{p}'$ is a proper infinitesimal motion, $(\mathbf{p}',\mathbf{p}'')$ is \textbf{proper} if (\ref{secondorder}) holds and $r''_i\ge 0 $ for all $ i\in \tilde{V}^+$, $r''_i\le 0 $ for all $ i\in \tilde{V}^-$, $r''_i= 0 $ for all $ i\in \tilde{V}^=$. $\mathbf{p}'$ is \textbf{extendable} if there exists a $\mathbf{p}''$ such that $(\mathbf{p}',\mathbf{p}'')$ is proper. $(G,\mathbf{p})$ is \textbf{second-order rigid} if all non-trivial infinitesimal motions $\mathbf{p}'$ are not extendable. 
	\end{defn}
	
	We should be careful that the partition of the vertices now depends on $\mathbf{p}'$. Now we are ready to prove the following proposition.
	
	\begin{proposition}\label{prestressprop}
	Given a packing $(G,\mathbf{p})$ and $\mathbf{p}'$ is a proper infinitesimal motion, $\mathbf{p}'$ is not extendable if there exists an equilibrium stress $\omega$ such that the radial force sum defined in (\ref{radial}) is non-negative on $\tilde{V}^-$, non-positive on $\tilde{V}^+$, 0 on $\tilde{V}^0$, and satisfies the following inequality:
	\begin{equation}\label{prestress}
	    \sum_{e_{ij}\in E}\omega_{ij}[(\mathbf{p}'_i-\mathbf{p}'_j)^T(\mathbf{p}'_i-\mathbf{p}'_j)-(r'_i+r'_j)^2]>0
	\end{equation}
	\end{proposition}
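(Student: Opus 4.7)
The plan is to replicate the stress-pairing argument from the proof of Theorem~\ref{firstOrderDuality} one derivative up, using the second-order tangency equation (\ref{secondorder}) in place of the first-order one. I would suppose toward contradiction that $\mathbf{p}'$ is extendable, i.e.\ that there exists $\mathbf{p}''$ for which the pair $(\mathbf{p}',\mathbf{p}'')$ is proper in the sense of the definition above, so that (\ref{secondorder}) holds on every edge and the entries of $\mathbf{p}''$ satisfy the sign constraints coming from the refined partition $\tilde V^{\pm,=,0}$.

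Next, I would multiply (\ref{secondorder}) by $\omega_{ij}$ on each edge and sum over $E$. The left-hand side becomes $\omega R(\mathbf{p})\mathbf{p}''$; because $\omega$ is an equilibrium stress, the coefficient of each $x_k''$ and $y_k''$ vanishes by (\ref{equilibrium}), and grouping the remaining terms per vertex via the definition (\ref{radial}) collapses the left-hand side to $-\sum_k \omega_k r_k''$. The right-hand side of the summed identity is exactly $-\bigl(\sum_{(i,j)}\omega_{ij}[(\mathbf{p}_i'-\mathbf{p}_j')\cdot(\mathbf{p}_i'-\mathbf{p}_j')-(r_i'+r_j')^2]\bigr)$, which by hypothesis (\ref{prestress}) is strictly negative. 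This yields
\begin{equation*}
-\sum_k \omega_k r_k'' \;=\; \sum_{(i,j)\in E}\omega_{ij}\bigl[(r_i'+r_j')^2-(\mathbf{p}_i'-\mathbf{p}_j')\cdot(\mathbf{p}_i'-\mathbf{p}_j')\bigr] \;<\; 0.
\end{equation*}

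To finish, I would check the signs of $\omega_k r_k''$ term by term against the refined partition: on $\tilde V^+$ one has $\omega_k\le 0$ and $r_k''\ge 0$; on $\tilde V^-$ one has $\omega_k\ge 0$ and $r_k''\le 0$; on $\tilde V^=$ one has $r_k''=0$; on $\tilde V^0$ one has $\omega_k=0$. In all four cases $\omega_k r_k''\le 0$, so $\sum_k \omega_k r_k''\le 0$, which contradicts the strict inequality displayed above. Hence no proper $\mathbf{p}''$ can exist and $\mathbf{p}'$ is not extendable.

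The only subtle point, and the reason the partition has been refined from $V^{\pm,=,0}$ to $\tilde V^{\pm,=,0}$, is the sign bookkeeping in the final step: a disk in $V^+$ or $V^-$ whose radius strictly changes at first order has been reassigned to $\tilde V^0$, which forces $\omega_k=0$ there and so leaves the inequality $\omega_k r_k''\le 0$ intact without imposing any sign constraint on $r_k''$. Once this reassignment is in place the argument is routine, the main content being the equilibrium identity that turns $\omega R(\mathbf{p})\mathbf{p}''$ into $-\sum_k\omega_k r_k''$.
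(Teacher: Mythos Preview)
Your proof is correct and follows essentially the same approach as the paper's: both arguments weight the second-order constraint (\ref{secondorder}) by $\omega_{ij}$, sum over edges, and obtain a contradiction between the strict negativity forced by (\ref{prestress}) and the non-positivity of $\sum_k \omega_k r_k''$ coming from the sign conditions on $\omega_k$ and $r_k''$. The only cosmetic difference is packaging: the paper bundles the computation as $\omega_e R'_e(\mathbf{p})\mathbf{p}''=0$ using the extended stress in the cokernel of the extended rigidity matrix, whereas you compute $\omega R(\mathbf{p})\mathbf{p}''=-\sum_k\omega_k r_k''$ directly from the equilibrium condition; the two are the same calculation.
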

	\begin{proof}
	First extend the stress as $\omega_e=(...\omega_{ij}...,\omega_k)$ where $\omega_{ij}$ is the stress on $e_{ij}$ indexed over $E$ and $\omega_k$ is the radial force sum defined in (\ref{radial}) for disk $k$ indexed over $\tilde{V}^+\cup \tilde{V}^-\cup  \tilde{V}^=$. $\omega_e$ is in the cokernel of $R'_e(\mathbf{p})$ be definition. If a proper $\mathbf{p}'$ is extendable, then the following equation must hold: \begin{flalign}
	    0&=\omega_e R'_e(\mathbf{p}) \mathbf{p}'' \nonumber\\
	    &=\sum_{e_{ij}\in E}\omega_{ij}[(\mathbf{p}_i-\mathbf{p}_j)(\mathbf{p}''_i-\mathbf{p}''_j)-(r_i+r_j)(r''_i+r''_j)]+\sum_{k\in \tilde{V}^+\cup \tilde{V}^-\cup \tilde{V}^=}\omega_k r''_k
	\label{3product2nd}
	\end{flalign}
	
    \begin{figure}
	        \centering
	        \includegraphics[height=7.5cm]{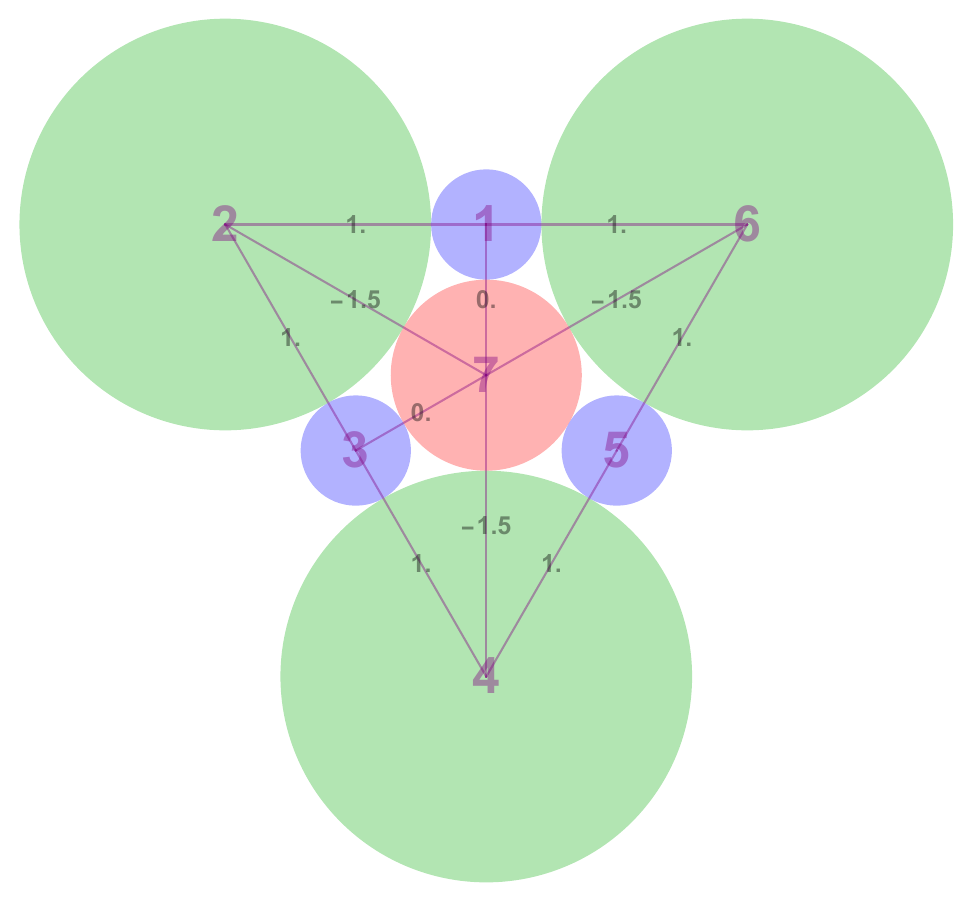}
            \captionsetup{labelsep=colon,margin=1.3cm}
	        \caption{The non-trivial infinitesimal motion is generated by moving disk 5 in the direction away from disk 7. Because disk 5 is in $V^-$, it cannot increase its radius. As a result, this packing is rigid. It can be shown this packing is prestress stable through method introduced in this section.}
	        \label{prestressexample}
	    \end{figure}
	Substituting (\ref{secondorder}) into (\ref{3product2nd}) we have: \begin{equation}\label{substituted3product}
	    0=\sum_{e_{ij}\in E}-\omega_{ij}[(\mathbf{p}'_i-\mathbf{p}'_j)^T(\mathbf{p}'_i-\mathbf{p}'_j)-(r'_i+r'_j)^2]+\sum_{k\in \tilde{V}^+\cup \tilde{V}^-\cup \tilde{V}^=}\omega_k r''_k
	\end{equation}
	The first term is strictly negative by (\ref{prestress}), and the second term is non-positive. Since we reached a contradiction if $\mathbf{p}''$ exists, $\mathbf{p}'$ is not extendable. 
	\end{proof}
	
	    \begin{figure}
	        \centering
	        \includegraphics[height=7.5cm]{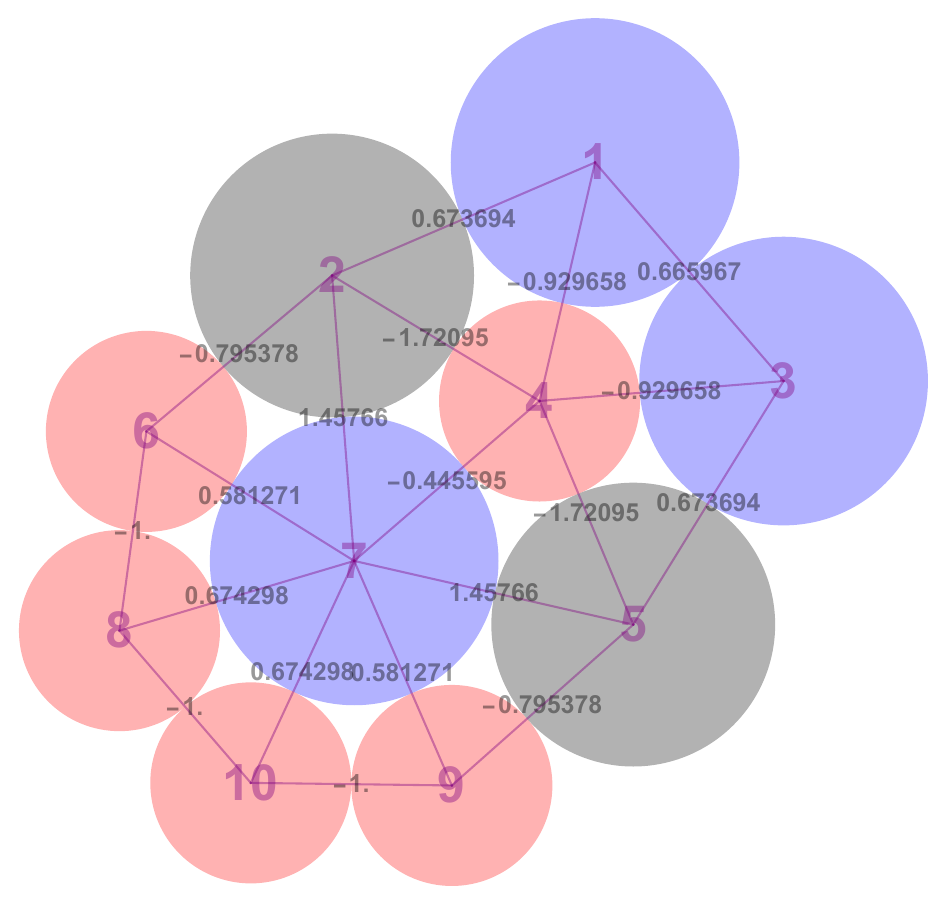}
            \captionsetup{labelsep=colon,margin=1.3cm}
	        \caption{The non-trivial infinitesimal motion is generated by expanding disk 2 while shrinking disk 5 at the same rate. All other radii are fixed infinitesimally. Other disks on the boundary move towards disk 5. However, the stress given shows it is prestress stable.}
	        \label{prestress75}
	    \end{figure}
	    
	\begin{defn}
	If a stress $\omega$ described in Proposition \ref{prestressprop} exists, we say $\omega$ \textbf{blocks} $\mathbf{p}'$. $(G,\mathbf{p})$ is \textbf{prestress stable} if some fixed stress blocks every proper non-trivial infinitesimal motion. 
	\end{defn}
        Here, we emphasize again that the sign requirements on the stress $\omega$ from Proposition \ref{prestressprop} depend on the new partition based on the given $\mathbf{p}'$. Nearly all calculations for second order rigidity are dependent on the $\mathbf{p'}$ being chosen. Therefore, the requirement that the same stress $\omega$ blocking every nontrivial proper $\mathbf{p}'$ is not as simple as it looks when there is more than one nontrivial proper infinitesimal motion. This usually requires a semi-definite programming solver. The reason we care about second-order rigidity is the following:
	
	\begin{proposition}
	If a packing $(G,\mathbf{p})$ is second-order rigid, then it is rigid. 
	\end{proposition}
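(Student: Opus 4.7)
The plan is to mimic the derivative-tower argument used in the proof of Proposition~\ref{firstorderrig}. Suppose for contradiction that $(G,\mathbf{p})$ is second order rigid but not rigid, so there is a nontrivial proper analytic flex $\mathbf{p}(t)$ with $\mathbf{p}(0)=\mathbf{p}$. As in Proposition~\ref{firstorderrig}, I would first compose $\mathbf{p}(t)$ with an analytic family of congruent motions to arrange that $\mathbf{p}^{(1)}(0)=\cdots=\mathbf{p}^{(k-1)}(0)=0$ and $\mathbf{p}^{(k)}(0)$ is not a trivial infinitesimal motion, where $k$ is the smallest such index. Congruent motions preserve both tangency and radii, so this reduction costs nothing with regard to properness, and the nontriviality of the flex guarantees such a finite $k$ exists.

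Taking the $k$-th derivative of the tangency relation then annihilates every cross term except $(0,k)$ and $(k,0)$, so the resulting identity is exactly the infinitesimal flex equation for $\mathbf{p}^{(k)}(0)$. Properness follows by Taylor expansion: for $i\in V^+$, the condition $r_i(t)\ge r_i(0)$ combined with $r_i^{(j)}(0)=0$ for $1\le j\le k-1$ forces $r_i^{(k)}(0)\ge 0$, and the $V^-$ and $V^=$ cases are symmetric. Hence $\mathbf{p}^{(k)}(0)$ is a nontrivial proper first-order flex.

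For the second-order extension I would set $\mathbf{p}''_{\mathrm{eff}}=\tfrac{2}{\binom{2k}{k}}\mathbf{p}^{(2k)}(0)$. Among the pairs $a+b=2k$ appearing in the $2k$-th derivative of tangency, only $(0,2k)$, $(k,k)$, $(2k,0)$ produce nonzero contributions, and after rearrangement the resulting identity is precisely equation~\eqref{secondorder} with $\mathbf{p}'=\mathbf{p}^{(k)}(0)$ and $\mathbf{p}''=\mathbf{p}''_{\mathrm{eff}}$. A similar Taylor-expansion check on $\tilde V^+, \tilde V^-, \tilde V^=$ confirms properness of $\mathbf{p}''_{\mathrm{eff}}$ in the good cases. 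Together with the previous step this produces a nontrivial proper first-order flex that is extendable, contradicting second order rigidity.

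The hard part is verifying properness of $\mathbf{p}''_{\mathrm{eff}}$ when $k\ge 2$. For $k=1$ the open interval $(k,2k)$ contains no integer, so $r'_i(0)=0$ and $r_i(t)\ge r_i(0)$ immediately give $r''_i(0)\ge 0$. For $k\ge 2$, however, a disk $i\in\tilde V^+$ whose first nonvanishing radius-derivative occurs at some order $l_i\in\{k+1,\ldots,2k-1\}$ leaves $r_i^{(2k)}(0)$ with uncontrolled sign, and the naive candidate $\mathbf{p}''_{\mathrm{eff}}$ can fail to be proper. The remedy I would pursue is to refine the first-order flex itself: each $\mathbf{p}^{(k+j)}(0)$ with $j<k$ is itself a first-order flex by the same cross-term annihilation, and replacing $\mathbf{p}^{(k)}(0)$ by $\mathbf{p}^{(k)}(0)+\sum_{j=1}^{k-1}\epsilon^j\mathbf{p}^{(k+j)}(0)$ for sufficiently small $\epsilon>0$ preserves properness while shrinking $\tilde V^+$ to those indices with $l_i\ge 2k$, for which $r_i^{(2k)}(0)\ge 0$ is automatic; a compensating adjustment of $\mathbf{p}''_{\mathrm{eff}}$ by a first-order flex then restores \eqref{secondorder}. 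This refinement is the technical heart of the proof.
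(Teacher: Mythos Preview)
Your derivative-tower argument is exactly the paper's: take the lowest nonvanishing order $k$, read off the first-order flex from the $k$-th Leibniz identity, and read off the second-order extension $\tfrac{2}{\binom{2k}{k}}\mathbf{p}^{(2k)}(0)$ from the $2k$-th. The paper stops there; you go further and flag the properness of $\mathbf{p}''_{\mathrm{eff}}$ when $k\ge 2$, which the paper does not address. Your observation is correct: if $i\in\tilde V^+$ has its first nonzero radius-derivative at some order $l_i\in\{k+1,\dots,2k-1\}$, then $r_i^{(2k)}(0)$ is genuinely unconstrained, so the paper's sentence ``$2/\binom{2n}{n}\,\mathbf{p}^{(2n)}$ will extend $\mathbf{p}^{(n)}$ as a proper second order motion'' is asserted rather than proved.

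Your patch---perturbing $\mathbf{p}'$ to $\mathbf{p}^{(k)}(0)+\sum_{j=1}^{k-1}\epsilon^j\mathbf{p}^{(k+j)}(0)$ so as to shrink $\tilde V^+$---is a sound idea, and your check that each $\mathbf{p}^{(k+j)}(0)$ is itself a first-order flex is right. The weak point is the last sentence: ``a compensating adjustment of $\mathbf{p}''_{\mathrm{eff}}$ by a first-order flex then restores \eqref{secondorder}.'' Adding a first-order flex to $\mathbf{p}''$ lies in $\ker R(\mathbf{p})$, so it leaves the left side of \eqref{secondorder} unchanged; it cannot compensate for the new right side produced by $\mathbf{p}'_\epsilon$. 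What actually saves you is Proposition~\ref{kerneldimension}: $R(\mathbf{p})$ has full row rank, so \eqref{secondorder} with the perturbed right side always has \emph{some} solution $\mathbf{p}''_\epsilon$, and one can build an explicit one from the higher Leibniz identities at orders $2k,2k+1,\dots$ (not merely order $2k$) so that its radius-entries on the new $\tilde V^\pm$ are controlled. Spelling that construction out is the missing step; the rest of your outline is at least as careful as the paper's own proof.
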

	\begin{proof} Let $(G,\mathbf{p}(t))$ be a proper motion. Suppose that the lowest-order nonzero derivative is of order $n$.  Without loss of generality, we assume that the first $n-1$ derivatives are all $0$s. In the proof of Proposition \ref{firstorderrig} we showed that the $n^{th}$ derivative must be an infinitesimal motion. The goal is to show that the second-order motion $(\mathbf{p'},\mathbf{p}'')$ will appear at the $n$ and $2n^{th}$ derivative. And therefore a nontrivial proper second order motion exists if a nontrivial proper motion exists.
	$$0= \left( \frac{\partial}{\partial t}\right)^{2n}[(\mathbf{p}_i(t)-\mathbf{p}_j(t))\cdot (\mathbf{p}_i(t)-\mathbf{p}_j(t))-(r_i(t)+r_j(t))(r_i(t)+r_j(t))]$$
		
		\begin{flalign}=\sum_{i=0}^{2n} \binom{2n}{i}\left(\frac{\partial}{\partial t}\right)^i(\mathbf{p}_i(t)-\mathbf{p}_j(t))\cdot \left(\frac{\partial}{\partial t}\right)^{2n-i}(\mathbf{p}_i(t)-\mathbf{p}_j(t))\nonumber\\-\left(\frac{\partial}{\partial t}\right)^i(r_i(t)+r_j(t))\left(\frac{\partial}{\partial t}\right)^{2n-i}(r_i(t)+r_j(t))\end{flalign}
		
		Now if the first $n-1$ degree derivatives are all $0$, then we are left with:
		\begin{gather}
		2\Biggl[(\mathbf{p}_i(t)-\mathbf{p}_j(t))\cdot \left(\frac{\partial}{\partial t}\right)^{2n}(\mathbf{p}_i(t)-\mathbf{p}_j(t))\nonumber\\-(r_i(t)+r_j(t))\left(\frac{\partial}{\partial t}\right)^{2n}(r_i(t)+r_j(t))\Biggr] \nonumber\\=-\binom{2n}{n}\Biggl[\left(\left(\frac{\partial}{\partial t}\right)^n(\mathbf{p}_i(t)-\mathbf{p}_j(t))\right)\cdot\left(\left(\frac{\partial}{\partial t}\right)^n(\mathbf{p}_i(t)-\mathbf{p}_j(t))\right)\nonumber\\
		+\left(\left(\frac{\partial}{\partial t}\right)^n(r_i(t)+r_j(t))\right)^2\,\Biggr]\nonumber
		\end{gather}
		
		Observe that these are the same relations as in equation (\ref{secondorder}) only off by a constant factor. Therefore, $2/\binom{2n}{n}\mathbf{p}^{2n}$ will extend $\mathbf{p}^n$ as a proper second-order motion if $\mathbf{p}^n$ is used as an infinitesimal motion. Since $(G,\mathbf{p})$ is second-order rigid, no such nontrivial infinitesimal flex can exist on order $n$. As a result, $\mathbf{p}(t)$ must be a trivial motion. 
	\end{proof}
	
	Figure \ref{prestressexample} to Figure \ref{prestress777555} are some examples of packings that have a one-dimensional nontrivial infinitesimal motion $\mathbf{p}'$, and the stress blocking $\mathbf{p}'$ is labeled on the edges. Figure \ref{prestressexample} is a case of a small packing that shows the idea behind prestress stability. Disk 5 is moving away from disk 7 in the only non-trivial infinitesimal motion, but disk 4 and disk 6 are \textquotedblleft pulling" it back as soon as disk 5 moves infinitesimally. Disk 5 cannot resist the force from disks 4 and 6 because it cannot increase its radii. Figures \ref{prestress75} and \ref{prestress777555} are much more complex. 
	
	    \begin{figure}
	        \centering
	        \includegraphics[height=7.5cm]{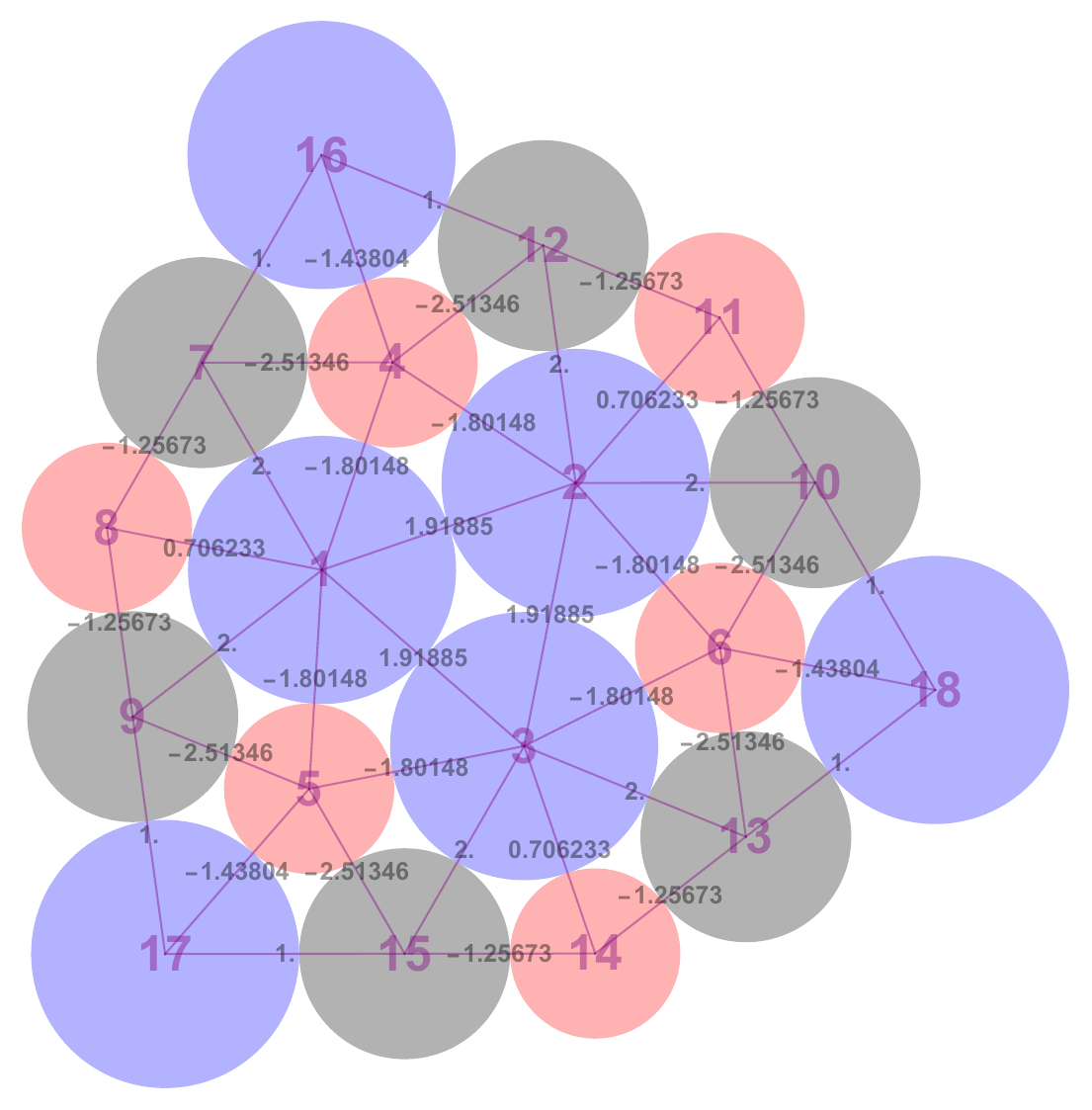}
            \captionsetup{labelsep=colon,margin=1.3cm}
	        \caption{The non-trivial infinitesimal motion is generated by expanding disk 7,10,15 while shrinking disk 9,12,13 at the same rate. All other radii are fixed infinitesimally. The rotational symmetry is preserved infinitesimally. }
	        \label{prestress777555}
	    \end{figure}
	
	An observation is that the switch of $V^+$ and $V^-$ does not preserve the prestress stability. In Figure \ref{prestressexample}, if disk 5 can increase its radius, then pulling disk 5 away from disk 7 while increasing its radius is a valid motion. Suppose that we switch the elements in $V^+$ and $V^-$, then the infinitesimal motion would be $-\mathbf{p}'$ with stress $-\omega$. This causes the first term of Equation \ref{substituted3product} to be positive. Therefore, the stress would no longer block the infinitesimal motion. 
	
	At the end of this section, we prove the converse of Proposition \ref{prestressprop} to establish the duality result for second-order rigidity: 
	\begin{thm}\label{secondOrderDuality}
	Given a packing $(G,\mathbf{p})$ and $\mathbf{p}'$ a non-trivial proper infinitesimal motion, exactly one of the following statements is true:
	 \begin{enumerate}
	    \item $\mathbf{p}'$ is extendable
	    \item	There exists an equilibrium stress $\omega$ blocking $\mathbf{p}'$. 
	\end{enumerate}
	\end{thm}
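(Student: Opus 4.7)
The plan is to mirror the strategy used in the proof of Theorem~\ref{firstOrderDuality}: one direction (if a blocking stress exists then $\mathbf{p}'$ is not extendable) is already established by Proposition~\ref{prestressprop}, so the content of the theorem reduces to its converse, which I would prove as a Farkas'-alternative statement applied to the linear system that governs second-order extendability.

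First I would write out the linear system that $\mathbf{p}''$ must satisfy for $(\mathbf{p}',\mathbf{p}'')$ to be a proper second-order extension. Let $c_{ij}=(r'_i+r'_j)^2-(\mathbf{p}'_i-\mathbf{p}'_j)^T(\mathbf{p}'_i-\mathbf{p}'_j)$. Then extendability of $\mathbf{p}'$ is equivalent to solvability of the system
\begin{equation*}
R(\mathbf{p})\mathbf{p}''=c,\qquad r''_i\ge 0\ (i\in\tilde V^+),\quad r''_i\le 0\ (i\in\tilde V^-),\quad r''_i=0\ (i\in\tilde V^=).
\end{equation*}
Exactly as in the first-order proof, I would encode these constraints as a single inequality system $A^T\mathbf{p}''\le b$ by splitting each equality into two inequalities. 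Concretely, take
\begin{equation*}
A^T=\begin{bmatrix}R(\mathbf{p})\\ -R(\mathbf{p})\\ -E_{\tilde V^+}\\ E_{\tilde V^-}\\ E_{\tilde V^=}\\ -E_{\tilde V^=}\end{bmatrix},\qquad b=\begin{bmatrix}c\\ -c\\ 0\\ 0\\ 0\\ 0\end{bmatrix}.
\end{equation*}

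Next, I would invoke the Farkas alternative in the form: either $A^T\mathbf{p}''\le b$ has a solution, or there exists $y\ge 0$ with $Ay=0$ and $b^Ty<0$. In the second alternative, write $y=(\omega^+,\omega^-,\lambda^+,\lambda^-,\lambda^=_1,\lambda^=_2)$ and set $\omega=\omega^+-\omega^-$. The identity $Ay=0$, read off on the $x$- and $y$-columns of $R(\mathbf{p})$, says exactly that $\omega$ is an equilibrium stress (equation~(\ref{equilibrium})). Reading it off on the $r_i$-column shows that the radial force sum $\omega_i$ defined in~(\ref{radial}) satisfies $\omega_i=\lambda^+_i\ge 0$ for $i\in\tilde V^+$ (after an appropriate sign bookkeeping from the $-r_i-r_j$ entries), $\omega_i\le 0$ for $i\in\tilde V^-$, $\omega_i=0$ for $i\in\tilde V^0$, and is unconstrained on $\tilde V^=$, matching the sign pattern required of a blocking stress. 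Finally, $b^Ty = c^T\omega$ rewrites as $\sum_{(i,j)\in E}\omega_{ij}\bigl[(r'_i+r'_j)^2-(\mathbf{p}'_i-\mathbf{p}'_j)^T(\mathbf{p}'_i-\mathbf{p}'_j)\bigr]<0$, which is precisely the strict inequality~(\ref{prestress}). Thus the Farkas certificate is a blocking stress in the sense of Proposition~\ref{prestressprop}, completing the converse direction.

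The only care required is a careful sign audit: the $r_i$-columns of $R(\mathbf{p})$ contribute $-r_i-r_j$ rather than $r_i+r_j$, so the signs that $\omega_i$ must satisfy on $\tilde V^\pm$ flip once relative to a naive count, and one must verify that after this flip the obtained stress is indeed a blocker and not its negative (the asymmetry already observed between $V^+$ and $V^-$ in the discussion following Figure~\ref{prestressexample}). Once the bookkeeping is done correctly, mutual exclusivity of the two alternatives is immediate from Proposition~\ref{prestressprop}, and the ``exactly one'' phrasing of the theorem follows. I expect the main potential pitfall to be this sign audit; everything else is a direct transcription of the LP-duality argument already used for the first-order duality theorem.
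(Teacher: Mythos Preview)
Your proposal is correct and follows essentially the same LP-duality approach as the paper: both prove the converse direction by applying a Farkas alternative to the linear system governing extendability of $\mathbf{p}'$ to a proper $\mathbf{p}''$. The only cosmetic difference is that the paper invokes the mixed equality/inequality form of Farkas directly (with $A_0=R(\mathbf{p})$, $A_+$ the stacked $E$-blocks, and $b_0=c$), whereas you split the equalities into pairs of inequalities exactly as in the first-order proof; once your sign audit is carried out (yielding $\omega_i\le 0$ on $\tilde V^+$ and $\omega_i\ge 0$ on $\tilde V^-$, as required by Proposition~\ref{prestressprop}), the two arguments are identical.
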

	\begin{proof}
	Proposition \ref{prestressprop} proved one direction. We prove the other here. First, notice that the statement is similar to theorem \ref{firstOrderDuality}. Therefore, an argument based on Farkas' Alternative should be expected. We use the following version of Farkas' alternative with mixed equalities and inequalities from \cite{secondORigidity}: 
	
	\begin{lem}
	Let $A_0$, $A_+$ be two matrices with $n$ columns and $m_0, m_1$ rows, respectively. $b_0\in\mathbb{R}^{m_0}$, $b_+\in \mathbb{R}^{m_1}$. Then there is a vector $x\in \mathbb{R}^n$ such that
	    $$A_0x=b_0 $$
	    $$A_+x\le b_+ $$
	    
	    if and only if for all vectors $y_0\in \mathbb{R}^{m_0}$, $y_+\in \mathbb{R}^{m_1}$ such that 
	    $$y_0A_0+y_+A_+=0$$
	    $$y_+\ge 0$$
	    
	    we have $y_0b_0+y_+b_+\ge 0$.
	\end{lem}
	Let $A_0=R(\mathbf{p})$, $b_0=\{...,-(\mathbf{p}'_i-\mathbf{p}'_j)^T(\mathbf{p}'_i-\mathbf{p}'_j)+(r'_i+r'_j)^2,... \}$, $A_+=\begin{bmatrix}
		-E_{\tilde{V}^+}\\
		E_{\tilde{V}^-}\\
		E_{\tilde{V}^=}\\
		-E_{\tilde{V}^=}
		\end{bmatrix}$, $b_+=0$, $x=\mathbf{p}''$, $y_0=\omega$. Then the lemma states $\mathbf{p}'$ extends to $\mathbf{p}''$ if and only if for all equilibrium stress $\omega$ satisfying the constraints on the radial force sums must have 
		\begin{equation}
		    \sum_{e_{ij}\in E}\omega_{ij}[-(\mathbf{p}'_i-\mathbf{p}'_j)^T(\mathbf{p}'_i-\mathbf{p}'_j)+(r'_i+r'_j)^2]\ge 0
		\end{equation}
		which can be re-written as 
		\begin{equation}\label{eq15}
		\sum_{e_{ij}\in E}\omega_{ij}[(\mathbf{p}'_i-\mathbf{p}'_j)^T(\mathbf{p}'_i-\mathbf{p}'_j)-(r'_i+r'_j)^2]\le 0
		\end{equation}
		
		Therefore, if $\mathbf{p}'$ is not extendable, some $\omega$ satisfying the radial force sum condition must violate equation (\ref{eq15}). 
	\end{proof}
	
	This establishes the duality for second-order rigidity. The following corollary is now obvious:  
	\begin{corollary}
	A packing $(G,\mathbf{p})$ is second-order rigid if and only if every nontrivial infinitesimal flex $\mathbf{p}'$ is blocked by some stress $\omega$. 
	\end{corollary}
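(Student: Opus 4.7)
The plan is to deduce this corollary directly from the preceding duality theorem, since the corollary is essentially a quantifier-level restatement of that theorem. First I would unfold the definition of second order rigidity: $(G,\mathbf{p})$ is second order rigid if and only if for every nontrivial proper infinitesimal motion $\mathbf{p}'$, there is no second order extension $\mathbf{p}''$ making $(\mathbf{p}',\mathbf{p}'')$ proper, i.e.\ $\mathbf{p}'$ is not extendable.

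Next I would invoke the preceding duality theorem pointwise: for each fixed nontrivial proper first order flex $\mathbf{p}'$, exactly one of the two alternatives holds, namely either $\mathbf{p}'$ is extendable, or there exists an equilibrium stress $\omega$ blocking $\mathbf{p}'$ (with the partition $\tilde V^+,\tilde V^-,\tilde V^=,\tilde V^0$ determined by $\mathbf{p}'$). Negating the first alternative and using the exclusivity, $\mathbf{p}'$ is not extendable if and only if some stress $\omega$ blocks $\mathbf{p}'$. Universally quantifying over all nontrivial proper $\mathbf{p}'$ gives the corollary in both directions at once.

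There is no real obstacle here. The only thing to be careful about is that the blocking stress $\omega$ is allowed to depend on $\mathbf{p}'$, which matches the definition of prestress stability versus second order rigidity: prestress stability corresponds to the single stress blocking every $\mathbf{p}'$, whereas second order rigidity only requires some stress per $\mathbf{p}'$. Since the duality theorem is stated per flex $\mathbf{p}'$, this is exactly the content obtained. No additional computation with equation \eqref{secondorder} or \eqref{prestress} is needed beyond what already appeared in the theorem.
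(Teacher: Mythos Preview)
Your proposal is correct and matches the paper's approach exactly: the paper simply states that the corollary is ``now obvious'' from the preceding duality theorem, and your argument spells out precisely that obvious deduction by applying the theorem pointwise to each nontrivial proper first order flex. Your remark distinguishing the per-flex stress here from the single stress in prestress stability is accurate and worth keeping.
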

    In the next section, we give an outline of how conjecture \ref{uniformconjecture} is related to global rigidity.
    \section{Global Rigidity and Conjecture 
    \ref{uniformconjecture}}\label{sectionGlobal}
    Although global rigidity is NP-complete for general packings \cite{BreuKirk}, it might be easier for simple triangulated packings. Nearly all known results of global rigidity come from the following observation of simple triangulated packings by Thurston \cite{thurston}:
    \begin{obsv}\label{boundarymonotone}
        For simple triangulated packings, the interior radii are uniquely and strictly monotonically determined by the boundary radii. 
    \end{obsv}
    The uniqueness means no non-congruent packings exist with identical boundary radii. The strict monotonicity means if at least one boundary radius is strictly increased while other boundary radii are not decreased, then all interior radii must strictly increase. In particular, if all boundary disks are in $V^+$ and one interior disk is in $V^-$ (or vice versa), then the packing is globally rigid. 

    Now we state some results related to Conjecture \ref{uniformconjecture}. We assume the largest disks have unit radii. 
    \begin{proposition}\label{vtxdegee}
        Let $G$ be a triangulation of $\mathbb{R}^2$ and $(G,\mathbf{p})$ be a packing with all radii in $[0.62,1]$, then every vertex has degree $5,6$ or $7$. 
    \end{proposition}
    \begin{proof}
        Consider the regular 4-flower, Figure \ref{4flowers}. It is globally rigid since the central disk cannot be larger if none of the boundary disks can increase their sizes according to \ref{boundarymonotone}. If a vertex $v_k$ has degree $4$, then Figure \ref{4flowers} gives the optimal radii ratio that is $\sqrt{2}-1\approx0.414<0.62$. If a vertex has degree $3$, then the central vertex would be even smaller (shrink a boundary radius to 0 and apply the monotonicity of \ref{boundarymonotone}). Similarly, if a vertex has degree at least $8$, then its radius is upper bounded by the case of a regular 8-flower with value $\frac{1}{1-\sin \left(\frac{\pi }{8}\right)}-1<0.62$.
    \end{proof}
    
    Therefore, we can enumerate the finite triangulated subgraphs of a plane triangulation that has a packing with a large radii ratio. If every vertex has degree 6, then the triangulation gives hexagonal packing. Since periodic triangulated packings are unique for a given triangulation\cite{Oded}, degree 5 and 7 vertices must exist if the packing is not the hexagonal packing. Our next observation is that the neighborhood of a degree 5 vertex is very limited. 
    \begin{proposition}\label{two5s}
        Let $G$ be a triangulation of $\mathbb{R}^2$ and $(G,\mathbf{p})$ be a packing with all radii in $[0.64,1]$, then two degree 5 vertices cannot share an edge. 
    \end{proposition}
    \begin{proof}
        The small disks in Figure \ref{fejestothhexagon} have radii approximately $0.637$. By the monotonicity of observation \ref{boundarymonotone}, the two small disks cannot become larger without any boundary disks being bigger than a unit disk. 
    \end{proof}
    \begin{proposition}
        Let $G$ be a triangulation of $\mathbb{R}^2$ and $(G,\mathbf{p})$ be a packing with all radii in $[0.64,1]$, then every degree 5 vertex has at least one degree 7 neighbor. 
    \end{proposition}

    \begin{proof}
        By Propositions \ref{vtxdegee} and \ref{two5s}, a degree 5 vertex satisfying our constraints can only have neighbors of degree 6 or 7. If all the neighbors have degree 6, then the radii ratio $q=\frac{r_{min}}{r_{max}}<0.63$ by Observation \ref{boundarymonotone}, as shown in Figure \ref{d5d7edge}.
    \end{proof}
    \begin{figure}
	        \centering
	        \includegraphics[height=7.5cm]{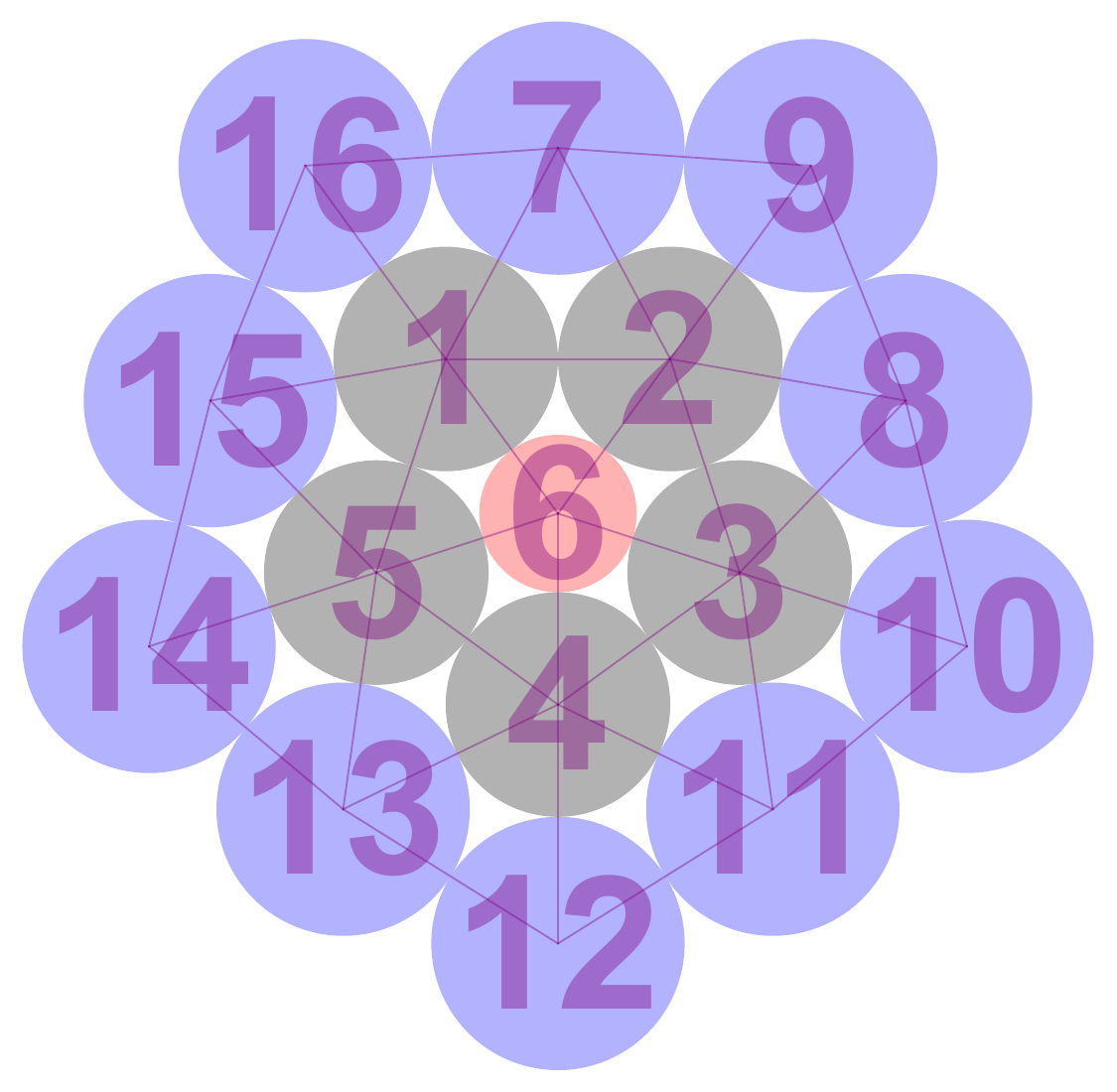}
            \captionsetup{labelsep=colon,margin=1.3cm}
	        \caption{This packing gives the radii ratio $q=r_{min}/r_{max}<0.63$ closest to the one when a degree 5 vertex only has degree 6 neighbors. A larger $q$ requires shrinking the sizes of all boundary disks while not shrinking the central small disk. This is impossible due to Observation \ref{boundarymonotone}.}
	        \label{d5d7edge}
	    \end{figure}

    \begin{proposition}\label{four7prop}
        Let $G$ be a triangulation of $\mathbb{R}^2$ and $(G,\mathbf{p})$ be a packing with all radii in $[0.645,1]$, then four degree 7 vertices cannot form two triangles.
    \end{proposition}
    
    \begin{proof}
        See Figure \ref{four7fig}.
    \end{proof}
    \begin{figure}
	        \centering
	        \includegraphics[height=7.5cm]{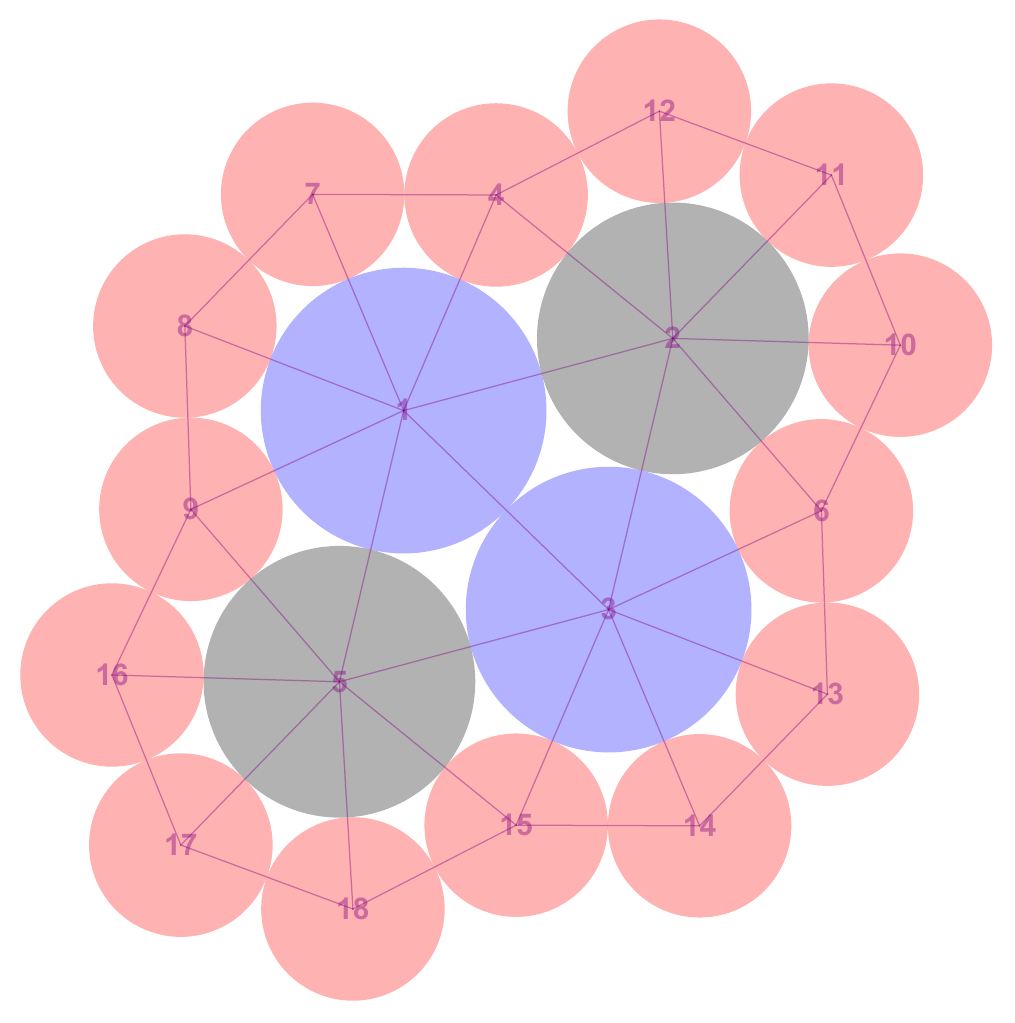}
            \captionsetup{labelsep=colon,margin=1.3cm}
	        \caption{This packing gives the radii ratio $q=r_{min}/r_{max}<0.643$ closest to one when four degree 7 vertices form two triangles. }
	        \label{four7fig}
	    \end{figure}

    Using these results, we can eliminate some finite graphs from possible triangulations that allow a packing to have $q\approx 0.651$. In fact, it is likely that three degree 7 vertices cannot form a triangle. From Figure \ref{three7pic}, it is possible to pack just 3 disks with radii in $[0.689,1]$. However, from Proposition \ref{four7prop}, we know that disks $4,5,6$ can only have degree 5 or 6 if three degree 7 disks do form a triangle in a larger packing. We can further enumerate the degrees of these disks. Figure \ref{prestress777555} is the case where they all have degree 5. In all cases, the optimal packings we can find have $q<0.651$, but we don't have a proof for their global rigidity. 
    
    \begin{figure}[ht]
	        \centering
	        \includegraphics[height=7.5cm]{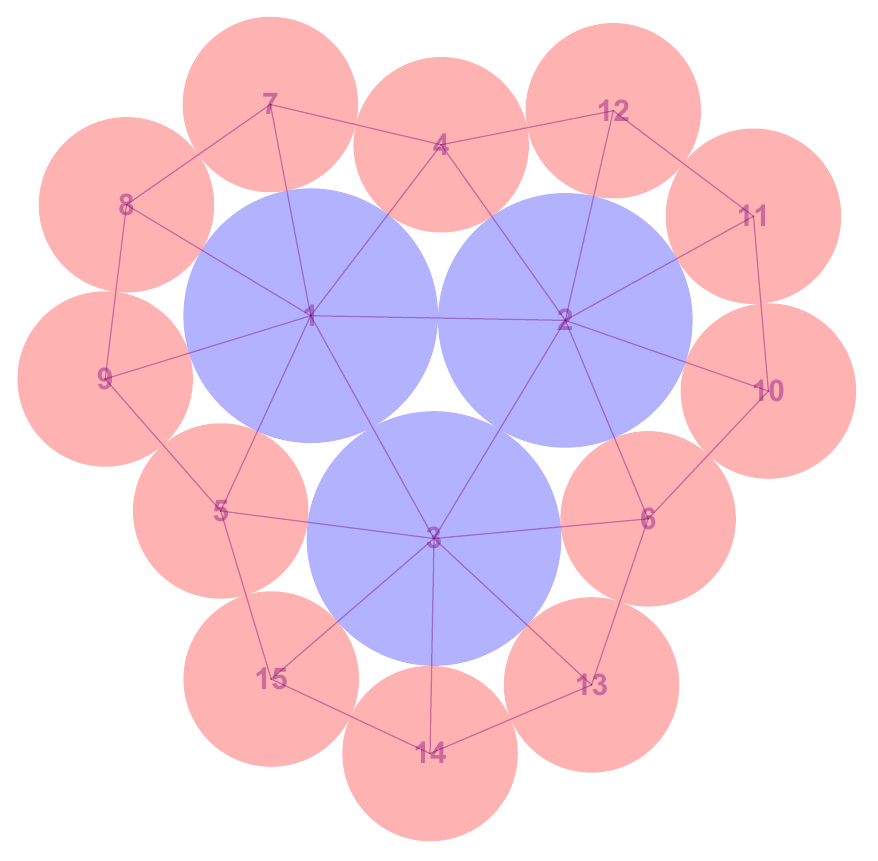}
            \captionsetup{labelsep=colon,margin=1.3cm}
	        \caption{This packing gives the radii ratio $q=r_{min}/r_{max}\approx 0.689$ closest to one when three degree 7 vertices form a triangle.}
	        \label{three7pic}
	    \end{figure}

    If Conjecture $\ref{uniformconjecture}$ is true and we had a way of determining global rigidity, then triangulations different from Fernique's (Figure \ref{fernique}) must be infeasible at some point as we enumerate increasingly larger neighborhoods (Since the degree of each disk is bounded, the number of triangulated graphs for a given number of disks is finite. The sizes of our disks are bounded below, so the set of disks of any triangulation with good radii ratio is countable. If the radii ratio beats Fernique's for infinitely many enumerations, then we arrive at a counterexample). Note that these infeasible triangulations are infeasible everywhere, not just in one place. Therefore, we just need to exclude enough simple triangulated graphs so that the triangulation from a single degree 5 disk can only be extended in one way to produce Fernique's triangulation. We can then assert the uniqueness of Fernique's packing for this triangulation using the result of \cite{Oded}. 
    
	In the next section, we summarize the content of this paper and list some questions based on observations from a fairly large number of finite packings. In particular, if $G$ is a simple triangulated disk, many good properties seem to hold. It is quite likely that some type of ``convexity'' exists in this case that allows many global optimization problems to be solved by a local optimum. 
\section{Summary and Open Problems}\label{sectionDiscussion}
    First, let us summarize the implication relationships between various notions of rigidity:
    \begin{align*}
        &\text{Infinitesimal Rigidity} \implies \text{Prestress Stability} \\&\implies  \text{Second Order Rigidity} \implies \text{Local Rigidity}
    \end{align*}

    $$\text{Global Rigidity} \implies \text{Local Rigidity}$$

    Infinitesimal rigidity trivially implies prestress stability since the set of nontrivial infinitesimal motions is empty, so every nontrivial infinitesimal motion is blocked. Prestress stability implies second-order rigidity by definition: prestress stability requires one stress to block all nontrivial infinitesimal motions, while second-order rigidity requires every nontrivial infinitesimal motion to be blocked by some stress. A globally rigid packing is obviously locally rigid, but it can be locally rigid for various reasons: it can be infinitesimally rigid, but it can also be not even second-order rigid. 

    Next, we give intuitive explanations, from an engineering point of view, of different types of structural local rigidities in general, not limited to circle packings. Readers interested in rigidity theory are encouraged to read \cite{connelly_2022}. In Mathematics, all types of rigidities assert local uniqueness because we assume our constraints are absolute. In real-world structures, materials always deform if external forces act on them. Infinitesimal rigidity is also known as ``static rigidity'' by engineers. Intuitively, statically rigid structures can generate an internal force that counterbalances any external forces (called external equilibrium loads) trying to induce a nontrivial flex, without any deformation themselves. This is what engineers often desire for a stable structure, especially when the material is not particularly stiff. Prestress stable structures often require a ``pre-existing stress'' to help rigidify the structure against a nontrivial infinitesimal motion. For example, one can wrap a disk in $V^+$ with a tight rubber band. This ``pre-existing stress'' counters potential infinitesimal motions. Another way of rigidifying prestress stable structures is using stiff materials that produce large resistant forces under external loads. When the structure moves infinitesimally, a strong resistant force is produced when the deformation is tiny. Second-order rigid structures may not feel rigid at all. 
    
	Our paper proves the duality of motions and stresses in infinitesimal and second-order cases for circle packings in the plane. For a general rigidity problem that is over-constrained, it is likely that the stress problem is as hard as the radius problem. However, in the context of local optimization over some function defined on a packing, it is almost guaranteed that the resulting packing only has one stress unless the graph is highly symmetric. This is because the optimization process stops as soon as the first stress that asserts local optimality exists. It is unlikely that multiple stresses occur exactly at the same time without some symmetry. 
	
	\begin{example}
	
	Maximizing $q=r_{\min}/r_{\max} $ for the corresponding planar embedded graphs yields Figures \ref{4flowers}, \ref{10diskprestress}, and \ref{prestress777555}. 
	\end{example}
	In these examples, there are two cases. For Figure \ref{4flowers}, the result is infinitesimally rigid, and so there are $3n-m-2$ inequality constraints. This is generally the case for most graphs that do not have symmetry. For Figures \ref{10diskprestress} and \ref{prestress777555}, they are infinitesimally flexible with only $3n-m-3$ inequality constraints. However, the only stress produced in the optimization process is able to block the infinitesimal motion. 
	The following question is critical to prove conjecture \ref{uniformconjecture}.
	
	\begin{question}
	If a packing has locally maximal $r_{min}/r_{max}$, is it also globally maximal (for a given orientation)? If not, is this at least true for simple triangulated packings? 
	\end{question}
	
	Note that in the case of simple triangulated packings, the set of packings with boundary radii in $[0,1]$ is convex, but the function $q$ is not concave. So we cannot expect a local maximum point to be a global one. However, our numerical simulations show $q$ ``looks almost concave'' for simple triangulated packings with large $q$. 
    
    The following question is based on observations:
	
	\begin{question}
	Does local rigidity imply infinitesimal or second-order rigidity for simple triangulated packings?
	\end{question}
	
	Generally, packings that are rigid but not rigid to the first or second order can be generated from examples of such bar frameworks (see \cite{higherorder}). However, the bar framework is necessarily rigid infinitesimally for simple triangulated packings. Therefore, counterexamples are much harder to find even if the answer is negative.

	\begin{example}
	Maximizing the sum of radii with $0\le r_i\le 1$ yields the packing in Figure \ref{sumr}. 
	\end{example}
	In Figure \ref{sumr}, there are $27$ vertices and $61$ edges. In this case, $3n-m-3=17$ is exactly the number of disks with radius $1$.  Together with the radius sum constraint, there are $3n-m-2$ constraints. Adding an extra row of $1$ to every radii corresponding to radii sum to $R_e(\mathbf{p})$ gives exactly one stress. This is generally the result of this optimization process. Unlike the radii ratio function, the radius sum function does not seem to preserve symmetry. As a result, even rigid infinitesimally flexible packings are hard to find. This motivates the following question:
	\begin{question}
	If a simple triangulated packing($m\ge 2n-3$) with no radius greater than $1$ has a maximal radius sum locally, does it always have at least $3n-m-3$ disks of radius $1$?
	\end{question}

	\begin{figure}
		\centering
		\includegraphics[height=7.5cm]{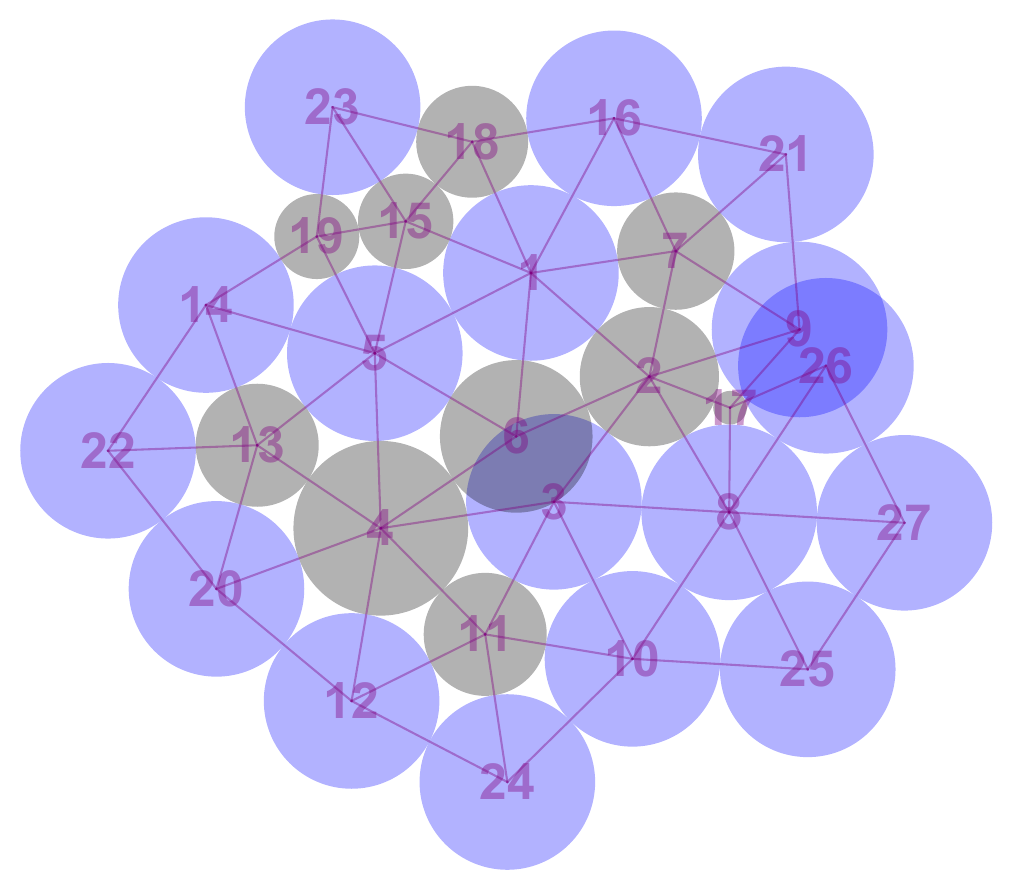}
        \captionsetup{labelsep=colon,margin=1.3cm}
		\caption{The result of local optimization over the sum of radii. There are $17=3n-m-3$ disks with radius 1. Note that our disks are not disjoint, but the orientation is identical to that of \textit{some planar packing} so the rows of the packing rigidity matrix $R(\mathbf{p})$ are still independent, giving a kernel of dimension $3n-m$.}
		\label{sumr}
	\end{figure}

	Next we give an answer to a more general version of the above questions: If a packing is locally rigid, is it globally rigid? The answer is unfortunately false given the following counterexample:

	\begin{example}
	Figure \ref{fixedradiusoptimal} has two packings that are reflections of each other. Both are rigid locally. The packing on the right is not globally rigid because of the packing on the left. 
	\end{example}
	    
		\begin{figure}
	        \centering
	        \includegraphics[height=5cm]{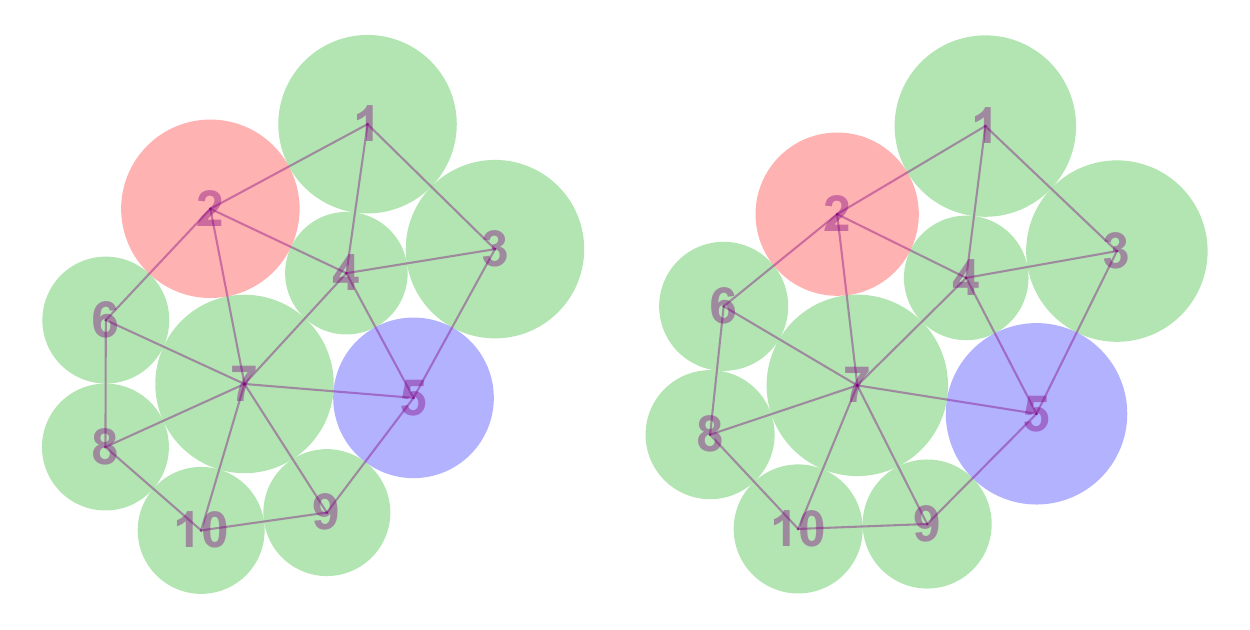}
            \captionsetup{labelsep=colon,margin=1.3cm}
	        \caption{The green disks have identical radii in both packings}
	        \label{fixedradiusoptimal}
	    \end{figure}
	    
    Figure \ref{fixedradiusoptimal} shows that the answer to the question is negative. Green disks with the same label have identical radii in both packings. They form a maximal independent set infinitesimally (unless $r_2=r_5$) and therefore must locally determine the packing. 
	
	Another observation is that global rigidity depends on the specific constraints on the radii. Although both packings in Figure \ref{fixedradiusoptimal} are infinitesimally rigid and mirror reflections of each other, the figure on the left is globally rigid because disk 2 is in $V^+$ (labeled red) so the packing on the right is not valid, while the figure on the right is not globally rigid since the figure on the left is valid for its constraints. 
	
	There are also cases where local rigidity clearly implies global rigidity. If the packing is simple and triangulated, the results from \cite{thurston, bauer_stephenson_wegert_2011,kstep} show that the boundary radii determine the interior radii strictly and monotonically. Therefore, if all boundary radii are bounded above with at least one interior radius bounded below or vice versa, then local rigidity implies global rigidity. One such example is Figure \ref{4flowers}. 
	
	Another example is Figure \ref{10diskprestress}, which was shown to be globally rigid using arguments based on angle sums in section 4. Our observation is that when the symmetries of the graph are preserved in the packing, local rigidity appears to imply global rigidity. 
	
	The next question concerns the existence of generic packings and rigidity when removing an edge. 
	\begin{question}
	    Given a planar embedded generically rigid graph $G$ with $n$ vertices and $m\ge 2n-3$ edges, is there a circle packing of $G$ with $3n-m-3$ generic radii? If so, is the set of such packings dense in the set of packings of $G$?
	\end{question}
	
	Intuitively, the answer to both of these questions should be affirmative if the graph is ``nice" enough. We give a heuristic argument on why the answers might be yes. For simple triangulated packings, the answers are obvious given there are $3n-m-3$ boundary radii and that boundary radii can be arbitrary \cite{bauer_stephenson_wegert_2011,kstep}. For graphs that are not triangulated, the idea is to remove edges from a triangulated one. Based on Lemma \ref{kerneldimension}, we know that removing an edge will always free a radius infinitesimally. The question then becomes whether or not it is always possible to extend this infinitesimal flex to a real flex. 
	
	Using Figure \ref{prestressexample} with an extra edge $e_{57}$ as an example, removing $e_{57}$ still keeps the packing rigid to the second order. However, this seems to require the packing to be not generic before we remove the edge. When disk $5$ is not on the same line as disk $4$ and disk $6$, removing the edge $e_{57}$ creates an extra degree of freedom. Notice that here we need $G$ to be planar embedded in order for Lemma \ref{kerneldimension} to hold. Without this assumption, it is possible to have duplicated disks sharing 3 neighbors that can never have generic radii (see Figure 6 from \cite{sticky_disc}).  

    \section*{Declarations}
    The authors declare that the data supporting the findings of this study are available within the paper. Specific implementations of the methods presented in this paper are available from the corresponding author upon reasonable request. 
	
	\hfill
	\printbibliography
\end{document}